\newcommand{\ip}[2]{\left\langle #1,  #2 \right\rangle}
\newcommand{\norm}[1]{\left\| #1 \right \|}
\newcommand{\tnorm}[1]{\left\| #1 \right \|}
\newcommand{\B}[2]{B \left( #1, #2 \right)}
\newcommand{\ls}[2]{\texttt{line\_search} \left(  #1, #2 \right)}
\newcommand{\argmin}{\operatornamewithlimits{argmin}}
\newcommand{\diag}[1]{\text{diag} \left(  #1 \right)}
\newcommand{\abs}[1]{|#1|}
\newcommand{\R}{\mathbb{R}}
\newcommand{\matrx}[1]{\begin{bmatrix} #1 \end{bmatrix} }
\newtheorem{thm}{Theorem}[section]
\theoremstyle{plain}
\newtheorem{lem}[thm]{Lemma}
\newtheorem{prop}[thm]{Proposition}
\newtheorem{rem}[thm]{Remark}
\title{An optimal first order method based on optimal
quadratic averaging}
\author{Dmitriy Drusvyatskiy\thanks{
University of Washington, Department of Mathematics, 
		Seattle, WA 98195; \texttt{ddrusv@uw.edu}. Research of Drusvyatskiy was partially supported by the AFOSR YIP award FA9550-15-1-0237.}
	\and Maryam Fazel\thanks{University of Washington, Department of Electrical Engineering, 
		Seattle, WA 98195; \texttt{mfazel@uw.edu}. Research partially supported by ONR award N00014-12-1-1002 and NSF award CIF-1409836.}
	\and 
	Scott Roy\thanks{University of Washington, Department of Mathematics, 
		Seattle, WA 98195; \texttt{scottroy@uw.edu}}
	}
\begin{document}
	\date{}
	\maketitle


	\begin{abstract}
		{In a recent paper, Bubeck, Lee, and Singh introduced a  new first order method for minimizing smooth strongly convex functions. Their geometric descent algorithm, largely inspired by the  
		ellipsoid method, enjoys the optimal linear rate of convergence. We show that the same iterate sequence is generated by a scheme that in each iteration computes an optimal average of quadratic lower-models of the function. Indeed, the minimum of the averaged quadratic approaches the true minimum at an optimal rate. 
		This intuitive viewpoint reveals clear connections to the original fast-gradient methods and cutting plane ideas, and leads to limited-memory extensions with improved  performance.}
	\end{abstract}

	\section{Introduction}
Consider a function $f\colon\R^n\to\R$ that is $\beta$-smooth and $\alpha$-strongly convex. Thus each point $x$ yields a quadratic upper estimator and a quadratic lower estimator of the function. Namely, 
inequalities $q(y; x)\leq f(y) \leq Q(y;x)$ 
hold for all $x,y\in \R^n$, where we set
\begin{align*}
q(y; x) &:= f(x) + \ip{ \nabla f(x) }{y - x} + \frac{\alpha}{2} \tnorm{y-x}^2,\\ 
Q(y;x) &:= f(x) + \ip{ \nabla f(x) }{y - x} + \frac{\beta}{2} \tnorm{y-x}^2.
\end{align*}
Classically, one step of the steepest descent algorithm decreases the squared distance of the iterate to the minimizer of $f$ by the fraction $1-\alpha/\beta$. This linear convergence rate is suboptimal from a computational complexity viewpoint. Optimal first-order methods, originating in Nesterov's work \cite{nest_orig} achieve the superior (and the best possible) linear rate $1-\sqrt{\alpha/\beta}$; see also the discussion in \cite[Section~2.2]{nestBook}. Such accelerated schemes, on the other hand, are notoriously difficult to analyze. Numerous recent papers (e.g. \cite{bubLeeSin,quad_const_acc,diff_accel,attouch_pey,lin_couple}) have aimed to shed new light on optimal algorithms. 

This manuscript is motivated by the novel geometric descent algorithm of Bubeck, Lee, and Singh \cite{bubLeeSin}. Their scheme is highly geometric, sharing some aspects with the ellipsoid method, and it achieves the optimal linear rate of convergence.
Moreover, the geometric descent algorithm often has much better practical performance than accelerated gradient methods; see the discussion in \cite{bubLeeSin}.
Motivated by their work, in this paper we propose
 an intuitive method that maintains a quadratic lower model of the objective function, whose minimal value converges to  
the true minimum at an optimal linear rate. We will show that the two methods are indeed equivalent in the sense that they produce the same iterate sequence.
The quadratic averaging viewpoint, however, has important advantages. First, it immediately yields a  comparison with the original accelerated gradient method \cite{nestBook,nest_orig} and cutting plane techniques. Secondly, quadratic averaging motivates a simple strategy for significantly accelerating the method in practice by utilizing accumulated information -- a limited memory version of the scheme.

The outline of the paper is as follows. In Section~\ref{sec:optAvg}, we describe the optimal quadratic averaging framework (Algorithm~\ref{alg:optQuadAvg}) -- the focal point of the manuscript.
In Section~\ref{sec:opt_quad_mem}, we propose a limited memory version of Algorithm~\ref{alg:optQuadAvg}, based on iteratively solving small dimensional quadratic programs.
In Section~\ref{sec:connect_geodesc}, we show that our Algorithm~\ref{alg:optQuadAvg} and the geometric descent method of \cite{bubLeeSin} produce the same iterate sequence.
Section~\ref{sec:numerical_examples} is devoted to numerical illustrations, in particular showing that the optimal quadratic averaging algorithm with memory can be competitive with L-BFGS.
We finish the paper with Section~\ref{sec:prox_ext}, where we discuss the challenges that must be overcome in order to derive proximal extensions. In the final stages of revising this paper, a new manuscript \cite{comp_geo_desc} appeared explaining how to 
overcome exactly these challenges.


\subsection{Notation}
We follow the notation of  \cite{bubLeeSin}. Given a point $x \in \R^n$, we define a \emph{short step}
\[ x^{+} := x - \frac{1}{\beta} \nabla f(x) \]
and a \emph{long step} 
\[ x^{++} := x - \frac{1}{\alpha} \nabla f(x). \]
Setting $y = x^+$  in the quadratic bound $f(y) \leq Q(y; x)$  yields the standard inequality 
\begin{equation}\label{eqn:desc}
f(x^+) + \frac{1}{2\beta} \tnorm{ \nabla f(x) }^2\leq f(x).
\end{equation}
We denote the unique minimizer of $f$ by $x^*$, its minimal value by $f^*$, and its condition number by $\kappa := \beta/\alpha$. Throughout, the symbol 
$B(x,R^2)$ stands for the Euclidean ball of radius $R$ around $x$. For any points $x,y\in \R^n$, we let $\ls{x}{y}$ be the minimizer of $f$ on the line between $x$ and $y$.

\section{Optimal quadratic averaging} \label{sec:optAvg}
The starting point for our development is the elementary observation that every point $\bar x$ provides a quadratic under-estimator of the objective function, having a canonical form. Indeed, completing the square in the strong convexity inequality $f(x) \geq q(x; \bar x)$ yields
\begin{equation} \label{eqn:lowerBound}
f(x) \geq \left(f(\bar x) - \frac{\tnorm{\nabla f(\bar x)}^2}{2 \alpha} \right)+ \frac{\alpha}{2} \tnorm{x - {\bar x}^{++}}^2.
\end{equation}
Suppose we have now available two quadratic lower-estimators:
\[ f(x) \geq Q_A(x) := v_A + \frac{\alpha}{2} \tnorm{ x - x_A }^2 \qquad \text{and} \qquad f(x) \geq Q_B(x) := v_B + \frac{\alpha}{2} \tnorm{ x - x_B }^2. \] 
Clearly, the minimal values of $Q_A$ and of $Q_B$ lower-bound the minimal value of $f$.
For  any $\lambda \in [0,1]$, the average $Q_{\lambda}:=\lambda Q_A+(1-\lambda)Q_B$  is again a quadratic lower-estimator of $f$. Thus we are led to the question:
\begin{center}
What choice of $\lambda$ yields the tightest lower-bound on the minimal value of $f$?
\end{center}
To answer this question, observe the equality
\begin{align*}
Q_{\lambda}(x):=\lambda Q_A(x) + (1 - \lambda) Q_B(x) 
= v_{\lambda}+ \frac{\alpha}{2} \tnorm{x - c_{\lambda}}^2,
\end{align*}
where 
$$c_{\lambda}=\lambda x_A + (1-\lambda) x_B$$
and 
\begin{equation}\label{eqn:v_lam}
v_{\lambda}= v_B + \left( v_A - v_B + \frac{\alpha}{2} \tnorm{x_A - x_B}^2 \right) \lambda - \left(\frac{\alpha}{2} \tnorm{x_A-x_B}^2 \right) \lambda^2.
\end{equation}
In particular, the average $Q_\lambda$ has the same canonical form as $Q_A$ and $Q_B$. 
A quick computation now shows that $v_{\lambda}$ (the minimum of $Q_\lambda$) is maximized  by setting
\[ \bar\lambda := \text{proj}_{[0,1]} \left( \frac{1}{2} + \frac{v_A - v_B}{\alpha \tnorm{x_A-x_B}^2} \right). \]
With this choice of $\lambda$, we call the quadratic function $\overline Q=\bar v+\frac{\alpha}{2}\|\cdot-\bar c\|^2$ the {\em optimal averaging} of 
$Q_A$ and $Q_B$. See Figure~\ref{fig:optAvg} for an illustration.
\begin{figure}[h!]
\centering
\includegraphics[scale=0.5]{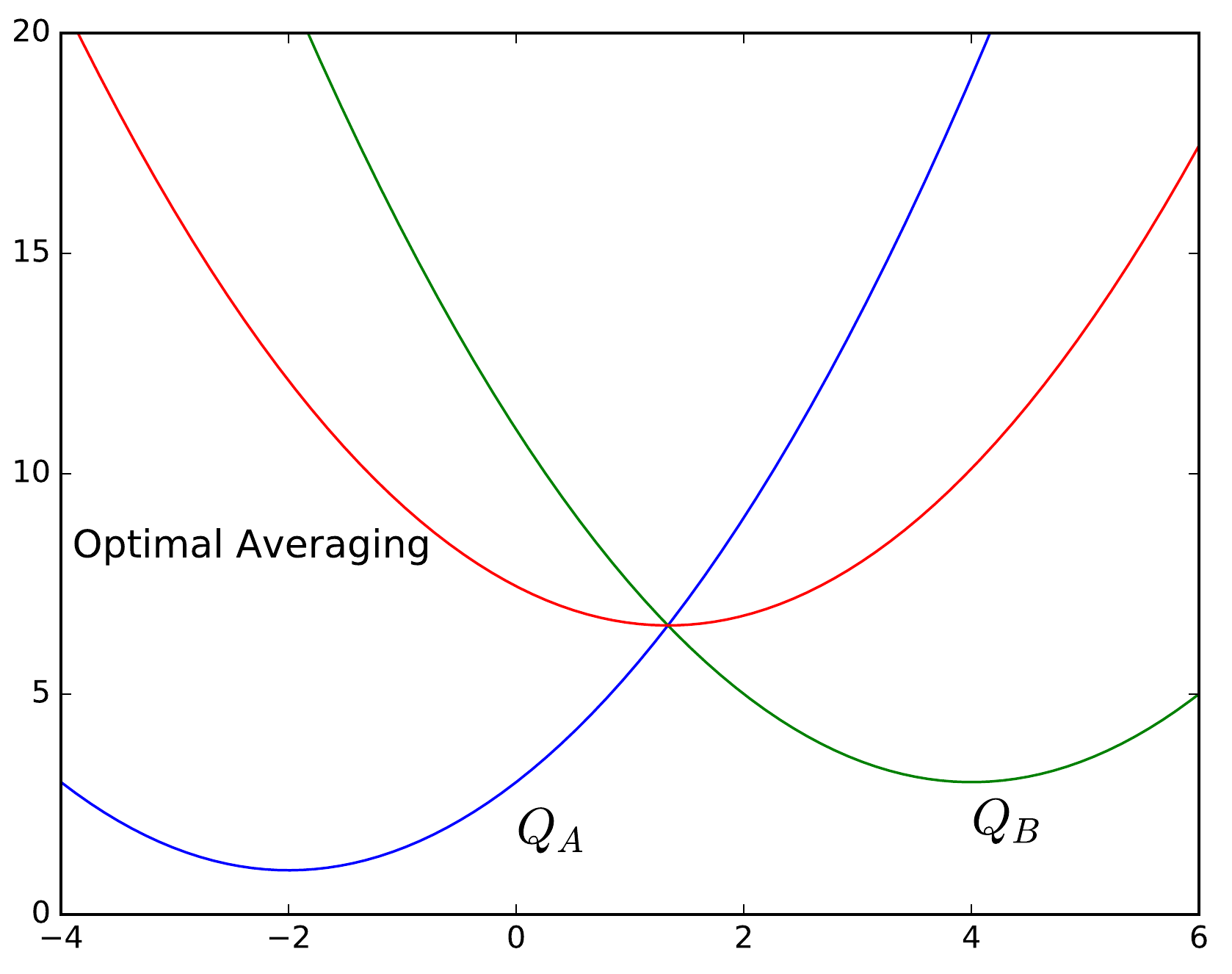}
\caption{The optimal averaging of $Q_A(x) = 1 + 0.5 ( x+2)^2$ and $Q_B(x) = 3 + 0.5 ( x - 4)^2$. }  \label{fig:optAvg}
\end{figure}

An algorithmic idea emerges. Given a current iterate $x_k$, form the quadratic lower-model $Q(\cdot)$ in \eqref{eqn:lowerBound} 
with $\bar x=x_k$. Then let $Q_{k}$ be the optimal averaging of $Q$ and the quadratic lower model $Q_{k-1}$ from the previous step. Finally define $x_{k+1}$ to be the minimizer of $Q_{k}$, and repeat. Though attractive, the scheme does not converge at an optimal rate. Indeed, this algorithm is closely related to the suboptimal method in \cite{bubLeeSin}; see Section~\ref{subsec:subopt_comp} for a discussion. The main idea behind acceleration, natural in retrospect, is a separation of roles: one must maintain two sequences of points $x_k$ and $c_k$. The points $x_k$  will generate quadratic lower models as above, while $c_k$ will be the minimizers of the quadratics.
We summarize the proposed method in Algorithm~\ref{alg:optQuadAvg}.
The rule for determining the iterate $x_{k}$ by a line search is entirely motivated by the geometric descent method in \cite{bubLeeSin}.

\begin{algorithm}[h!]
\caption{Optimal Quadratic Averaging}  \label{alg:optQuadAvg}
\KwIn{Starting point $x_0$ and strong convexity constant $\alpha > 0$.}
\KwOut{Final quadratic $Q_K(x) = v_K + \frac{\alpha}{2} \tnorm{x - c_K}^2$ and $x_K^+$.}
Set $Q_0(x) = v_0 + \frac{\alpha}{2} \tnorm{x - c_0}^2$, where $v_0 =  f(x_0 ) - \frac{\tnorm{\nabla f(x_0)}^2}{2 \alpha}$ and $c_0 = x_0^{++}$\;
\For{k = 1, \ldots, K}{ 
Set $x_{k} = \ls{c_{k-1}}{x_{k-1}^+}$\;
Set $Q(x) =  \left(f(x_{k}) - \frac{\tnorm{\nabla f(x_{k})}^2}{2 \alpha}\right) + \frac{\alpha}{2} \tnorm{ x - x_{k}^{++}}^2$ \;
Let $Q_{k}(x) = v_{k}+\frac{\alpha}{2}\|x-c_{k}\|^2$ be the optimal averaging of $Q$ and $Q_{k-1}$ \; }
\end{algorithm}

\begin{rem}
{\rm
When implementing Algorithm~\ref{alg:optQuadAvg}, we set $x_k^+=\ls{x_k}{x_ k - \nabla f(x_k)}$. This does not impact the analysis as $x_k^+$ still satisfies the key inequality~\eqref{eqn:desc}. With this modification, the algorithm does not require $\beta$ as part of the input, and we have observed that the algorithm performs better numerically.}
\end{rem}

To aid in the analysis of the scheme, we record the following easy observation.
\begin{lem} \label{lem:optAvg}
Suppose that $\overline Q=\bar v+\frac{\alpha}{2}\|\cdot-\bar c\|^2$ is the optimal averaging of the quadratics $Q_A=v_A+\frac{\alpha}{2}\|\cdot-x_A\|^2$ and $Q_B=v_B+\frac{\alpha}{2}\|\cdot-x_B\|^2$. Then the quantity $\bar v$ is nondecreasing in both $v_A$ and $v_B$. Moreover, whenever the inequality $|v_A-v_B|\leq \frac{\alpha}{2}\|x_A-x_B\|^2$ holds, we have
$$\bar v=\frac{\alpha}{8}\|x_A-x_B\|^2+\frac{1}{2}(v_A+v_B)+\frac{1}{2\alpha}\left(\frac{v_A-v_B}{\|x_A-x_B\|}\right)^2.$$
\end{lem}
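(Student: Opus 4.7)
The plan is to work directly from the explicit parametric formula
\eqref{eqn:v_lam}, viewing $\bar v$ as the maximum over $\lambda \in [0,1]$ of the concave quadratic
$$
v_\lambda \;=\; v_B + \Bigl(v_A - v_B + \tfrac{\alpha}{2}\|x_A-x_B\|^2\Bigr)\lambda - \tfrac{\alpha}{2}\|x_A-x_B\|^2\,\lambda^2.
$$

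\textbf{Step 1 (monotonicity).} For each fixed $\lambda\in[0,1]$, the quantity $v_\lambda$ is affine in $(v_A,v_B)$ with nonnegative coefficients $\lambda$ and $1-\lambda$ respectively. Since a pointwise maximum of a family of nondecreasing functions is nondecreasing, and since $\bar v = \max_{\lambda\in[0,1]} v_\lambda$ by the very definition of the optimal averaging, the first claim follows immediately.

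\textbf{Step 2 (locating $\bar\lambda$).} Differentiating in $\lambda$ and setting the derivative equal to zero gives the unconstrained maximizer
$$
\lambda^{*} \;=\; \frac{1}{2} + \frac{v_A-v_B}{\alpha\|x_A-x_B\|^2}.
$$
The hypothesis $|v_A-v_B|\le \tfrac{\alpha}{2}\|x_A-x_B\|^2$ is exactly equivalent to $\lambda^{*}\in[0,1]$, so the projection in the definition of $\bar\lambda$ is inactive and $\bar\lambda = \lambda^{*}$.

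\textbf{Step 3 (substitute and simplify).} Plug $\bar\lambda = \tfrac12 + \tfrac{v_A-v_B}{\alpha\|x_A-x_B\|^2}$ back into the formula for $v_\lambda$. Writing $\Delta = v_A-v_B$ and $d=\|x_A-x_B\|^2$, the linear term contributes $\Delta + \tfrac{\Delta^2}{\alpha d} + \tfrac{\alpha d}{4}$ and the quadratic term contributes $\tfrac{\alpha d}{8} + \tfrac{\Delta}{2} + \tfrac{\Delta^2}{2\alpha d}$, and subtracting yields
$$
\bar v \;=\; v_B + \tfrac{\Delta}{2} + \tfrac{\Delta^2}{2\alpha d} + \tfrac{\alpha d}{8},
$$
which is exactly the stated identity.

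There is no real obstacle; the only thing to be careful about is keeping the algebra in Step 3 clean, and noting in Step 2 that the condition $|v_A-v_B|\le \tfrac{\alpha}{2}\|x_A-x_B\|^2$ precisely rules out the boundary cases $\bar\lambda\in\{0,1\}$ where the explicit formula would no longer hold.
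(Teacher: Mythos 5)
Your proof is correct, and for the monotonicity claim it takes a genuinely cleaner route than the paper. The paper establishes that $\bar v$ is nondecreasing in $v_A$ by a case analysis on where $\hat\lambda$ falls relative to $[0,1]$: on the interior it computes $\partial\bar v/\partial v_A$ explicitly and checks nonnegativity, off the interval it argues $\bar v$ is linear in $v_A$ with slope $0$ or $1$, and it then patches the cases together by continuity. You instead observe that
$\bar v = \max_{\lambda\in[0,1]} v_\lambda$ with
$v_\lambda = \lambda v_A + (1-\lambda)v_B + \tfrac{\alpha}{2}\|x_A-x_B\|^2(\lambda-\lambda^2)$,
so each $v_\lambda$ is nondecreasing in $(v_A,v_B)$ (coefficients $\lambda\ge 0$ and $1-\lambda\ge 0$) and a pointwise maximum of nondecreasing functions is nondecreasing. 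This avoids the case split, the derivative computation, and the continuity argument entirely, and it generalizes verbatim to the optimal average of $t$ quadratics in Section~3, where the paper's derivative-based argument would be more awkward. Your Steps 2--3 coincide with the paper's treatment of the explicit formula, and the algebra checks out. One small imprecision in your closing remark: the hypothesis $|v_A-v_B|\le\tfrac{\alpha}{2}\|x_A-x_B\|^2$ does not ``rule out'' $\bar\lambda\in\{0,1\}$ --- with the weak inequality, $\lambda^*$ may equal $0$ or $1$, and the formula still holds there since the projection remains inactive; what the hypothesis rules out is $\lambda^*\notin[0,1]$, where the projection would clip it. This does not affect the validity of the argument.
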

\begin{proof}
Define $\hat{\lambda} := \frac{1}{2} + \frac{v_A - v_B}{\alpha \tnorm{x_A-x_B}^2}$.
Notice that we have
\begin{align*}
\hat{\lambda} \in [0,1] &\quad \text{if and only if} \quad |v_A-v_B|\leq \frac{\alpha}{2}\|x_A-x_B\|^2.
\end{align*}
If $\hat{\lambda}$ lies in $[0,1]$, equality $\bar \lambda = \hat \lambda$ holds, and then from \eqref{eqn:v_lam} we deduce
\[ \bar v = v_{\bar \lambda} = \frac{\alpha}{8}\|x_A-x_B\|^2+\frac{1}{2}(v_A+v_B)+\frac{1}{2\alpha}\left(\frac{v_A-v_B}{\|x_A-x_B\|}\right)^2. \]
If $\hat{\lambda}$ does not lie in $[0,1] $, then an easy argument shows that $\bar v$ is linear in $v_A$ either with slope one or zero. If $\hat{\lambda}$  lies in $(0,1)$, then we compute
\[ \frac{\partial \bar v}{\partial  v_A} = \frac{1}{2} + \frac{1}{\alpha \tnorm{x_A - x_B}^2} (v_A - v_B) ,\]
which 
is nonnegative because $\frac{\abs{v_A - v_B}}{\alpha \tnorm{x_A - x_B}^2} \leq \frac{1}{2}$. Since $\bar v$ is clearly continuous, it follows that $\bar v$ is nondecreasing in $v_A$, and by symmetry also in  $v_B$.
\end{proof}

We now show that Algorithm~\ref{alg:optQuadAvg} achieves the optimal linear rate of convergence.
\begin{thm}[Convergence of optimal quadratic averaging] \label{thm:optAvgConvergence}
In Algorithm~\ref{alg:optQuadAvg}, for every index $k\geq 0$, the inequalities $v_k\leq f^*\leq f(x_k^+)$ hold and we have
\[ f(x_k^+) - v_k \leq   \left( 1 - \frac{1}{\sqrt{\kappa}} \right)^k (f(x_0^+) - v_0).\]
\end{thm}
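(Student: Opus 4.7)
The statement has two parts: the flanking inequalities $v_k\le f^*\le f(x_k^+)$, and the geometric contraction of the gap $e_k:=f(x_k^+)-v_k$. I would handle the flanking inequalities by a routine induction: the upper bound is immediate, and the lower bound follows because $Q_k$ is a convex combination of $Q$ (a quadratic lower model of $f$ by strong convexity) and $Q_{k-1}$ (a lower model by the inductive hypothesis), so $v_k=\min Q_k\le f^*$. The real work is the contraction $e_k\le(1-1/\sqrt\kappa)e_{k-1}$.

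The key observation for the contraction is that $\bar\lambda$ maximizes $v_\lambda$ over $[0,1]$: for \emph{any} $\lambda\in[0,1]$ we get $v_k\ge v_\lambda$, hence $e_k\le f(x_k^+)-v_\lambda$. Using the closed form $v_\lambda=\lambda v_{k-1}+(1-\lambda)v+\tfrac{\alpha}{2}\lambda(1-\lambda)D^2$, with $v:=f(x_k)-\tfrac{1}{2\alpha}\|\nabla f(x_k)\|^2$ and $D^2:=\|c_{k-1}-x_k^{++}\|^2$, the plan is to bound the three pieces $f(x_k^+)-v_{k-1}$, $f(x_k^+)-v$, and $D^2$, then pick $\lambda$ to make everything collapse.

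For the first piece, the line search defining $x_k$ forces $f(x_k)\le f(x_{k-1}^+)$, and one application of \eqref{eqn:desc} then yields $f(x_k^+)-v_{k-1}\le e_{k-1}-\tfrac{1}{2\beta}\|\nabla f(x_k)\|^2$. For the second, the definition of $v$ combined with \eqref{eqn:desc} gives $f(x_k^+)-v\le\tfrac{1}{2\alpha}(1-1/\kappa)\|\nabla f(x_k)\|^2$. For the third, the line search optimality makes $\nabla f(x_k)$ orthogonal to $c_{k-1}-x_{k-1}^+$, hence to $x_k-c_{k-1}$, and Pythagoras gives $D^2\ge\|\nabla f(x_k)\|^2/\alpha^2$.

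Assembling these estimates, the coefficient of $\|\nabla f(x_k)\|^2$ in the resulting bound on $f(x_k^+)-v_\lambda$ collapses to $[\beta(1-\lambda)^2-\alpha]/(2\alpha\beta)$. Choosing $\lambda=1-1/\sqrt\kappa$ makes $\beta(1-\lambda)^2=\beta/\kappa=\alpha$, so this bracket vanishes, leaving exactly $f(x_k^+)-v_\lambda\le(1-1/\sqrt\kappa)e_{k-1}$. I expect the main obstacle to be recognizing that the three bounds can be tuned so the unknown $\|\nabla f(x_k)\|$ cancels at a \emph{single} value of $\lambda$; once the bounds are in place, this cancellation—hinging on the identity $\alpha\kappa=\beta$—forces the optimal rate.
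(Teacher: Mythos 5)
Your proof is correct, and it takes a genuinely different route from the paper's. The paper works with the \emph{exact} value of the optimally averaged quadratic: it splits into the cases $h^2\le r_{k-1}/2$ and $h^2> r_{k-1}/2$ (with $h=\tnorm{\nabla f(x_k)}/\alpha$), invokes the closed form and the monotonicity statement of Lemma~\ref{lem:optAvg} to pass to the lower bounds $\hat v_A,\hat v_B$, and then finishes with a two-stage optimization (monotonicity of the bound in $d^2$ followed by a minimization over $h$). You instead exploit only that $\bar\lambda$ maximizes the concave function $\lambda\mapsto v_\lambda$ over $[0,1]$, so it suffices to lower-bound $v_\lambda$ at the single fixed weight $\lambda=1-1/\sqrt{\kappa}$; your three estimates (the descent inequality \eqref{eqn:desc} combined with $f(x_k)\le f(x_{k-1}^+)$ from the line search, the definition of $v$, and the Pythagorean bound $D^2\ge \tnorm{\nabla f(x_k)}^2/\alpha^2$ from line-search orthogonality) are all valid, and the coefficient of $\tnorm{\nabla f(x_k)}^2$ does collapse to $\bigl(\beta(1-\lambda)^2-\alpha\bigr)/(2\alpha\beta)$, which vanishes exactly at your choice of $\lambda$. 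This eliminates the case analysis, the closed-form expression \eqref{eqn:represent_v}, and Lemma~\ref{lem:optAvg} entirely (monotonicity of $v_\lambda$ in $v_A,v_B$ is trivial for fixed $\lambda\in[0,1]$). What your route buys is brevity and a transparent link to estimate sequences: the weight $1/\sqrt{\kappa}$ you place on the new quadratic is precisely the weight used in Algorithm~\ref{alg:nestOpt} with $\gamma_0=\alpha$, so your argument shows that optimal averaging can only do better than Nesterov's fixed averaging and therefore inherits the optimal rate. What the paper's route buys is the exact optimal-average value, which mirrors the two-ball shrinking lemma of geometric descent and feeds directly into the equivalence established in Section~\ref{sec:connect_geodesc}.
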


\begin{proof}
Since  in each iteration, the algorithm only averages quadratic minorants of $f$, the inequalities $v_k\leq f^*\leq f(x_k^+)$ hold for every index $k$.
Set $r_0=\frac{2}{\alpha} (f(x_0^+) - v_0)$ and define the quantities $r_k := \left( 1 - \frac{1}{\sqrt{\kappa}} \right)^k r_0$.
We will show by induction that the inequality $v_k \geq  f(x_k^+) - \frac{\alpha}{2} r_k$ holds for  all $k \geq 0$.
The base case $k=0$ is immediate, and so assume we have
\[ v_{k-1} \geq f(x_{k-1}^+) - \frac{\alpha}{2} r_{k-1} \]
for some index $k-1$.
Next set $v_A := f(x_{k}) - \frac{\tnorm{\nabla f(x_{k})}^2}{2 \alpha}$ and $v_B := v_{k-1}$.
Then the function
\[ Q_{k}(x) = v_{k} + \frac{\alpha}{2} \tnorm{x - c_{k}}^2, \]
is the optimal averaging of $Q_A(x) =v_A + \frac{\alpha}{2} \tnorm{ x - x_{k}^{++} }^2$ and $Q_B(x) = v_B + \frac{\alpha}{2} \tnorm{x - c_{k-1}}^2$.
An application of \eqref{eqn:desc} yields the lower bound $\hat{v}_A$ on $v_{A}$:
\begin{align*}
v_A &= f(x_{k}) - \frac{\tnorm{\nabla f(x_{k})}^2}{2 \alpha}  
\geq f(x_{k}^+) - \frac{\alpha}{2} \frac{\tnorm{\nabla f(x_{k})}^2}{\alpha^2} \left( 1 - \frac{1}{\kappa} \right) := \hat{v}_A.
\end{align*}
The induction hypothesis and the choice of $x_{k}$ yield a lower bound $\hat{v}_B$ on $v_B$:
\begin{align*}
v_B \geq f(x_{k-1}^+) - \frac{\alpha}{2} r_{k-1} &\geq f(x_{k}) - \frac{\alpha}{2} r_{k-1} \\
&\geq f(x_{k}^+) + \frac{1}{2\beta} \tnorm{ \nabla f(x_{k}) }^2 - \frac{\alpha}{2} r_{k-1} \\
&= f(x_{k}^+) - \frac{\alpha}{2} \left( r_{k-1} - \frac{1}{\alpha^2 \kappa} \tnorm{ \nabla f(x_{k}) }^2 \right) := \hat{v}_B.
\end{align*}

Define the quantities $d := \tnorm{x_{k}^{++} - c_{k-1}}$ and  $h := \frac{\tnorm{\nabla f(x_{k})}}{\alpha}$.
We now split the proof into two cases.
First assume $h^2\leq \frac{r_{k-1}}{2}$.
Then we deduce
\begin{align*}
v_{k} \geq v_{A}\geq \hat{v}_A&= f(x_{k}^+) - \frac{\alpha}{2} h^2 \left( 1 - \frac{1}{\kappa} \right) \\
&\geq  f(x_{k}^+) - \frac{\alpha}{2} r_{k-1} \left( \frac{1- \frac{1}{\kappa}}{2} \right) \\
&\geq f(x_{k}^+) - \frac{\alpha}{2} r_{k-1} \left( 1 - \frac{1}{\sqrt{\kappa}} \right) \\
&= f(x_{k}^+) - \frac{\alpha}{2} r_{k},
\end{align*}
where the third line follows since $2/\sqrt{\kappa}\leq 1+1/\kappa$ holds.
Hence in this case, the proof is complete.

Next suppose $h^2 > \frac{r_{k-1}}{2}$ and let $v+\frac{\alpha}{2}\|\cdot-c\|^2$ be the optimal average of the two quadratics $\hat{v}_A+\frac{\alpha}{2}\|\cdot - x_{k}^{++}\|^2$ and $\hat{v}_B+\frac{\alpha}{2}\|\cdot-c_{k-1}\|^2$. By Lemma~\ref{lem:optAvg}, the inequality $v_{k}\geq v$ holds.
We claim that equality 
\begin{equation}\label{eqn:represent_v}
v = \hat{v}_B + \frac{\alpha}{8} \ \frac{ ( d^2 + \frac{2}{\alpha}( \hat{v}_A - \hat{v}_B ) )^2 }{d^2}\qquad \textrm{ holds}.
\end{equation}
This follows immediately from Lemma~\ref{lem:optAvg}, once we show $\frac{1}{2} \geq \frac{\abs{\hat{v}_A-\hat{v}_B}}{\alpha d^2}$. To this end, note first the equality $\frac{\abs{\hat{v}_A-\hat{v}_B}}{\alpha d^2}=\frac{\abs{r_{k-1} - h^2}}{2d^2}$.
The choice  $x_{k}=\ls{c_{k-1}}{x_{k-1}^+}$ ensures: 
$$d^2-h^2= \|x_k-c_{k-1}\|^2-\frac{2}{\alpha} \langle \nabla f(x_k),x_k-c_{k-1} \rangle=\|x_k-c_{k-1}\|^2\geq 0 .$$
Thus we have $h^2 - r_{k-1} < h^2 \leq d^2$.
Finally, the assumption $h^2 > \frac{r_{k-1}}{2}$ implies
\begin{equation}\label{eqn:little}
r_{k-1} - h^2 < \frac{r_{k-1}}{2} < h^2 \leq d^2.
\end{equation}
Hence we can be sure that \eqref{eqn:represent_v} holds.
Plugging in $\hat{v}_A$ and $\hat{v}_B$ yields
\begin{align*}
v &=  f(x_{k}^+) - \frac{\alpha}{2} \left( r_{k-1} - \frac{1}{\kappa} h^2 - \frac{ ( d^2 + r_{k-1} - h^2)^2 }{4 d^2} \right).
\end{align*}
Hence the proof is complete once we show the inequality
\[ r_{k-1} - \frac{1}{\kappa} h^2 - \frac{(d^2 + r_{k-1} - h^2)^2}{4 d^2} \leq \left( 1 - \frac{1}{\sqrt{\kappa}} \right) r_{k-1}.  \]
After rearranging, our task simplifies to showing the inequality
\[ \frac{r_{k-1}}{\sqrt{\kappa}} \leq \frac{h^2}{\kappa} + \frac{(d^2 + r_{k-1} - h^2)^2}{4 d^2}. \]
Taking derivatives and using inequality \eqref{eqn:little}, one can readily verify that the right-hand-side is nondecreasing in $d^2$ on the interval  $d^2\in [h^2,+\infty)$.
Thus plugging in the endpoint $d^2=h^2$ we deduce 
\[ \frac{h^2}{\kappa} + \frac{(d^2 + r_{k-1} - h^2)^2}{4 d^2} \geq \frac{h^2}{\kappa}  + \frac{r_{k-1}^2}{4 h^2}. \]
Minimizing the right-hand-side over all $h$ satisfying $h^2 \geq \frac{r_{k-1}}{2}$ yields the  inequality
\[ \frac{h^2}{\kappa} + \frac{r_{k-1}^2}{4 h^2}  \geq \frac{r_{k-1}}{\sqrt{\kappa}}. 
\]
The proof is complete.
\end{proof}


%

It is instructive to compare optimal averaging (Algorithm~\ref{alg:optQuadAvg}) with Nesterov's optimal methods in \cite{nestBook,nest_orig}. 
For convenience, we record the optimal gradient method following \cite{nestBook}, in Algorithm~\ref{alg:nestOpt}.

\begin{algorithm}[h!]
	\caption{General scheme of an optimal method [Nesterov]}  \label{alg:nestOpt}
	\KwIn{Starting points $x_0$ and $c_0$, strong convexity constant $\alpha > 0$, smoothness parameter $\beta > 0$, and initial quadratic curvature $\gamma_0 \geq \alpha$.}
	\KwOut{Final quadratic $Q_K(x) = v_K + \frac{\gamma_K}{2} \tnorm{x - c_K}^2$.}
	Set $Q_0(x) = v_0 + \frac{\gamma_0}{2} \tnorm{x - c_0}^2$, where $v_0 = f(x_0) - \frac{1}{2\beta} \tnorm{\nabla f(x_0)}^2$ \;
	\For{k = 1, \ldots, K}{ 
		Compute averaging parameter $\lambda_{k} \in (0,1)$ from $\beta \lambda_{k}^2 = (1-\lambda_{k}) \gamma_{k-1} + \lambda_{k} \alpha$ \;
		Set $\gamma_{k} = (1-\lambda_{k}) \gamma_{k-1} + \lambda_{k} \alpha$. \;
		Set $x_{k} = (1 - \theta_{k}) c_{k-1} + \theta_{k} x_{k-1}^+$ where $\theta_{k} = \frac{\gamma_{k}}{\gamma_{k-1} + \lambda_{k} \alpha}$   \;
		Set $Q(x) =  \left(f(x_{k}) - \frac{\tnorm{\nabla f(x_{k})}^2}{2 \alpha}\right) + \frac{\alpha}{2} \tnorm{ x - x_{k}^{++}}^2$ \;
		Let $c_{k}$ be the minimizer of the quadratic $Q_{k}(x) = (1-\lambda_{k}) Q_{k-1}(x) + \lambda_{k} Q(x)$ \;} 
	\tcc{If we set $\gamma_0 = \alpha$, then we have $\gamma_k = \alpha$, $\lambda_k = \frac{1}{\sqrt{\kappa}}$, and $\theta_k = \frac{\sqrt{\kappa}}{1 + \sqrt{\kappa}}$.}
\end{algorithm}

 Comparing Algorithms~\ref{alg:optQuadAvg} and \ref{alg:nestOpt}, we see that
\begin{itemize}
	\item $x_{k}$ is some point on the line between $c_{k-1}$ and $x_{k-1}^+$, and
	\item $Q_{k}$ is an average of the previous quadratic $Q_{k-1}$ and the strong convexity quadratic lower bound $Q$ based at $x_{k}$.
\end{itemize}
As we discuss in Appendix~\ref{sec:exact_line_search}, we can modify Nesterov's method so that like in optimal quadratic averaging, we set $x_{k} = \ls{c_{k-1}}{x_{k-1}^+}$ in each iteration.
After this change, only two differences remain between the schemes:
\begin{itemize}
	\item the initial quadratic $Q_0$ is different, and
	\item the averaging parameter is computed differently.
\end{itemize}
These differences, however, are fundamental.
In Algorithm~\ref{alg:optQuadAvg}, the quadratic $Q_0$  lower bounds $f$ and therefore optimal averaging makes sense; in the accelerated gradient method, $Q_0$ does not lower bound $f$, and the idea of optimal averaging does not apply.

\section{Optimal quadratic averaging with memory}\label{sec:opt_quad_mem}
Each iteration of Algorithm~\ref{alg:optQuadAvg} forms an optimal average of the current lower quadratic model with the one from the previous iteration; that is, as stated the scheme has a memory size of one. We next show how the scheme easily adapts to  maintaining limited memory, i.e. by averaging multiple quadratics in each iteration. We mention in passing that the authors of \cite{bubLeeSin} left open the question of efficiently speeding up their geometric descent algorithm in practice. One approach of this flavor has recently appeared in \cite[Section 4]{bubLeenew}. The optimal averaging viewpoint, developed here, provides a direct and satisfying alternative.
Indeed, computing the optimal average of several quadratics is easy, and amounts to solving a small dimensional quadratic optimization problem.

To see this, fix $t$ quadratics $Q_i(x) := v_i + \frac{\alpha}{2} \tnorm{x - c_i}^2$, with $i \in \{1, \ldots, t \}$, and a weight vector $\lambda$ in the $t$-dimensional simplex $\Delta_t := \left\{ x \in \R^t : \sum_{i=1}^t x_i = 1, \ x \geq 0 \right\}$.
The average quadratic
\[ Q_{\lambda}(x) := \sum_{i=1}^t \lambda_i Q_i(x) \]
maintains the same canonical form as each $Q_i$.
\begin{prop} \label{prop:optavg}
Define the matrix $C = \matrx{ c_1 & c_2 & \ldots & c_t }$ and  vector $v = \matrx{ v_1 & v_2 & \ldots & v_t }^T$.
Then we have
\[ Q_{\lambda}(x) = v_{\lambda} + \frac{\alpha}{2} \tnorm{ x - c_{\lambda} }^2, \]
where
\[ c_{\lambda} = C \lambda \quad \text{and} \quad v_{\lambda} = \ip{ \frac{\alpha}{2} {\rm diag\,}{(C^T C) } + v}{ \lambda} - \frac{\alpha}{2} \tnorm{C \lambda}^2. \]
\end{prop}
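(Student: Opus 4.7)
The plan is to prove the identity by a direct expansion followed by completing the square, using only that $\sum_{i=1}^{t} \lambda_i = 1$ (the nonnegativity of $\lambda$ is not needed for the algebraic identity, only the affinity of the combination).

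First I would expand each quadratic in its monomial form. Writing
$$Q_i(x) = v_i + \frac{\alpha}{2}\bigl(\|x\|^2 - 2\langle x, c_i\rangle + \|c_i\|^2\bigr),$$
and summing against $\lambda$, the affinity $\sum_i \lambda_i = 1$ preserves the quadratic coefficient $\frac{\alpha}{2}\|x\|^2$, while the linear part collapses to $-\alpha \langle x, \sum_i \lambda_i c_i\rangle = -\alpha \langle x, C\lambda\rangle$. This already shows that $Q_\lambda$ is a quadratic of the form $\frac{\alpha}{2}\|x\|^2 + (\textrm{linear in } x) + (\textrm{constant})$ with curvature $\alpha$ and the linear part coming from the centroid $c_\lambda := C\lambda$.

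Next I would complete the square around $c_\lambda$, using the identity $\frac{\alpha}{2}\|x - c_\lambda\|^2 = \frac{\alpha}{2}\|x\|^2 - \alpha\langle x, c_\lambda\rangle + \frac{\alpha}{2}\|c_\lambda\|^2$. Subtracting this from $Q_\lambda(x)$ leaves a constant, which I read off as
$$v_\lambda = \sum_{i=1}^t \lambda_i v_i + \frac{\alpha}{2}\sum_{i=1}^t \lambda_i \|c_i\|^2 - \frac{\alpha}{2}\|C\lambda\|^2.$$
Finally, I would rewrite the first two terms in the claimed inner-product form: $\sum_i \lambda_i v_i = \langle v, \lambda\rangle$ and $\sum_i \lambda_i \|c_i\|^2 = \langle \mathrm{diag}(C^TC), \lambda\rangle$, since the $i$-th diagonal entry of $C^TC$ is precisely $\|c_i\|^2$. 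Combining these yields exactly $v_\lambda = \langle \frac{\alpha}{2}\mathrm{diag}(C^TC) + v, \lambda\rangle - \frac{\alpha}{2}\|C\lambda\|^2$.

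There is essentially no obstacle here; the result is purely a bookkeeping exercise in expanding quadratics and completing the square. The only point worth flagging is the use of $\sum_i \lambda_i = 1$, which is what makes the $\|x\|^2$ coefficients sum to $\frac{\alpha}{2}$ (so that the average remains a canonical quadratic with the same curvature $\alpha$) rather than to some other value.
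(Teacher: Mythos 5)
Your proof is correct and follows essentially the same elementary route as the paper: the paper identifies the canonical form from the Hessian, finds $c_\lambda=C\lambda$ by setting the gradient to zero, and evaluates $v_\lambda=Q_\lambda(c_\lambda)$, while you expand into monomials and complete the square — the same bookkeeping in a slightly different order. Your remark that only $\sum_i\lambda_i=1$ (and not $\lambda\geq 0$) is needed for the curvature to remain $\alpha$ is accurate and is implicit in the paper's argument as well.
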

\begin{proof}
The Hessian of $Q_{\lambda}$ is simply $\frac{\alpha}{2}I$, and 
therefore the quadratic $Q_{\lambda}(x)$ has the form
\[ v_{\lambda} + \frac{\alpha}{2} \tnorm{ x - c_{\lambda} }^2\]
for some $v_{\lambda}$ and $c_{\lambda}$.
Notice that $c_{\lambda}$ is the minimizer of $Q_{\lambda}$, and by differentiating, we determine that $c_{\lambda} = \sum_{i=1}^t \lambda_i c_i = C \lambda$.
We then compute
\begin{align*}
v_{\lambda} = Q_{\lambda} ( c_{\lambda} ) &= \sum_{i=1}^t \left( \lambda_i v_i + \frac{\lambda_i \alpha}{2} \tnorm{C \lambda - c_i}^2 \right) \\
&= \ip{v}{\lambda} + \frac{\alpha}{2} \sum_{i=1}^t \lambda_i \left( \tnorm{C \lambda}^2 - 2 \ip{C \lambda}{c_i} + \tnorm{c_i}^2 \right) \\
&= \ip{v}{\lambda} + \frac{\alpha}{2} \tnorm{C \lambda}^2 - \alpha \ip{C \lambda}{ \sum_{i=1}^t \lambda_i c_i } + \frac{\alpha}{2} \sum_{i=1}^t \lambda_i \tnorm{c_i}^2\\
&= \ip{ \frac{\alpha}{2} \diag{ C^T C } + v}{ \lambda} - \frac{\alpha}{2} \tnorm{C \lambda}^2.
\end{align*}
The proof is complete.
\end{proof}

Naturally, we define the \emph{optimal averaging} of the quadratics $Q_i$, with $i \in \{1, 2, \ldots, t\}$, to be $Q_{\bar \lambda}$, where $\bar \lambda$ is the maximizer of the concave quadratic over the simplex:
\[ \min_{\lambda\in \Delta_t}~ v_{\lambda} = \ip{ \frac{\alpha}{2} \diag{ C^T C } + v}{ \lambda} - \frac{\alpha}{2} \tnorm{C \lambda}^2.\]
There is no closed form expression for $\bar \lambda$, but one can quickly find it by solving a quadratic program in $t$ variables, for example by an active set method. Moreover, some thought shows that the matrix $C^TC$ can be efficiently updated if one of the centers changes; we omit the details.

We propose an optimal averaging scheme with memory in Algorithm~\ref{alg:optAvgWithMem}.
As we see in Section~\ref{sec:numerical_examples}, the method performs well numerically.
Moreover, the scheme enjoys the same convergence guarantees as Algorithm~\ref{alg:optQuadAvg}; that is, Theorem~\ref{thm:optAvgConvergence} applies to Algorithm~\ref{alg:optAvgWithMem}, with nearly the same proof (which we omit).




\begin{algorithm}
\caption{Optimal Quadratic Averaging with Memory}  \label{alg:optAvgWithMem}
\KwIn{Starting point $x_0$, strong convexity constant $\alpha > 0$, and memory size $t \geq 1$.}
\KwOut{Final quadratic $Q_K(x) = v_K + \frac{\alpha}{2} \tnorm{x - c_K}^2$ and $x_K^+$.}
Set $Q_0(x) = v_0 + \frac{\alpha}{2} \tnorm{x - c_0}^2$, where $v_0 =  f(x_0 ) - \frac{\tnorm{\nabla f(x_0)}^2}{2 \alpha}$ and $c_0 = x_0^{++}$ \;
\For{k = 1, \ldots, K}{
Set $x_{k} = \ls{c_{k-1}}{x_{k-1}^+}$\;
Set $M_{k}(x) = f(x_{k}) - \frac{\tnorm{\nabla f(x_{k})}^2}{2 \alpha}  + \frac{\alpha}{2} \tnorm{x - x_{k}^{++} }^2$ \;
Let $Q_k(x) := v_k + \frac{\alpha}{2} \tnorm{x - c_k}^2$ be the optimal averaging of the
\begin{align*}
k + 1 \text{ quadratics } &\quad Q_{k-1}, \ M_k, \ M_{k-1}, \ \ldots, \ M_1 & \quad \text{if } & k \leq t, \text{ or of the} \\
t + 1 \text{ quadratics } &\quad Q_{k-1}, \ M_k, \ M_{k-1}, \ \ldots, \ M_{k-t +1} &\quad \text{if } & k \geq t + 1 ;
\end{align*} }
\end{algorithm}

The reader may notice that Algorithm~\ref{alg:optAvgWithMem} shows some similarity to the classical Kelley's method for minimizing nonsmooth convex functions \cite{kel}. In the simplest case of minimizing a smooth convex function $f$ on $\R^n$, Kelley's method iterates the following steps
$$x_{k+1}=\argmin_{x}\, f_k(x)$$
for the functions
$$f_k(x):=\max_{i=1,\ldots,k} \{f(x_i)+\langle \nabla f(x_i),x-x_i\rangle\}.$$
In other words, the scheme iteratively minimizes the (piecewise linear) lower-models $f_k$ of $f$. Coming back to the optimal averaging viewpoint, suppose that $Q_{\bar \lambda}$ is an optimal average of the lower-bounding quadratics $Q_i$, for $i=1,\ldots,k$. Then we may write
\begin{align*}
v_{\bar{\lambda}}&=\max_{\lambda\in \Delta_k}\, \min_x\, \sum_{i}\lambda_i Q_i(x)
=\min_x\, \max_{\lambda\in \Delta_k}\,\sum_{i}\lambda_i Q_i(x)
=\min_x\, \left(\max_{i=1,\ldots,k}\, Q_i(x)\right)
\end{align*}
Thus $v_{\bar{\lambda}}$ is the minimal value of the now different lower-model, $\max_{i=1,\ldots,k}\, Q_i$, of $f$. Kelley's method is known to have poor numerical performance and convergence guarantees (e.g. \cite[Section 3.3.2]{nestBook}), while Algorithm~\ref{alg:optAvgWithMem} achieves the optimal linear convergence rate. This disparity is of course based on the two key distinctions: (1) using quadratic lower-models coming from strong convexity instead of linear functions, and (2) maintaining two separate sequences $c_k$ (centers) and $x_k$ (sources of lower model updates).

\section{Equivalence to geometric descent}\label{sec:connect_geodesc}
Algorithm~\ref{alg:optQuadAvg} is largely motivated by the geometric descent method introduced by Bubeck, Lee, and Singh \cite{bubLeeSin}.
In this section, we show the two methods (Algorithm 1 and Algorithm 4) indeed generate an identical iterate sequence.

\subsection{Suboptimal geometric descent method}\label{subsec:subopt_comp}
The basic idea of geometric descent \cite{bubLeeSin} is
that for each point $x \in \R^n$, the strong convexity lower bound $f^* \geq q(x^*; x)$ defines a ball containing $x^*$:
\[ x^* \in \B{x^{++}}{ \frac{\tnorm{\nabla f(x)}^2}{\alpha^2} - \frac{2}{\alpha} \left( f(x) - f^* \right)  }. \] 
In turn, taking into account \eqref{eqn:desc} yields the guarantee
\begin{equation}\label{eqn:main_inc}
 x^* \in \B{x^{++}}{ \left(1 - \frac{1}{\kappa} \right) \frac{\tnorm{\nabla f(x)}^2}{\alpha^2} - \frac{2}{\alpha} \left( f(x^+) - f^* \right)  }. 
 \end{equation}
A crude upper estimate of the radius above is obtained simply by ignoring the nonnegative term $\frac{2}{\alpha} \left( f(x^+) - f^* \right)$.
The suboptimal geometric descent method proceeds as follows.
Suppose we have available some ball $\B{c_0}{R_0^2}$ containing $x^*$.
As discussed, the quadratic lower bound at the center $c_0$, namely $f^*\geq q(x^*,c_0)$, yields another ball $\B{c_0^{++}}{ \left(1 - \frac{1}{\kappa} \right) \frac{\tnorm{\nabla f(c_0)}^2}{\alpha^2}}$ containing $x^*$.
Geometrically it is clear that the intersection of these two balls must be significantly smaller than either of the individual balls. 
The following lemma from \cite{bubLeeSin} makes this observation precise; see Figure~\ref{fig:illust_lemma} for an illustration.
\begin{lem} [Minimal enclosing ball of the intersection] \label{lem:ballIntersection1}
	Fix a center $x \in \R^n$, square radius $R^2 > 0$, step $h \in \R^n$, and $\epsilon \in (0,1)$.
	Then there exists a new center $c \in \R^n$ with
	\[  \B{x}{R^2} \cap \B{x + h}{(1- \epsilon) \tnorm{h}^2} \subset \B{c}{(1 - \epsilon) R^2}. \]
\end{lem}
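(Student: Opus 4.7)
My plan is to exploit the rotational symmetry of the configuration about the line through $x$ and $x+h$, which suggests taking the new center $c$ on that line. After translating so that $x=0$, I would parametrize $c = \lambda h$ for a scalar $\lambda \in [0,1]$ to be determined. The lemma then reduces to showing that every $y \in \R^n$ satisfying $\tnorm{y}^2 \le R^2$ and $\tnorm{y-h}^2 \le (1-\epsilon)\tnorm{h}^2$ also satisfies $\tnorm{y-\lambda h}^2 \le (1-\epsilon)R^2$.

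The analytic core is to convert the second ball constraint into a linear inequality in $\ip{y}{h}$. Expanding $\tnorm{y-h}^2 = \tnorm{y}^2 - 2\ip{y}{h} + \tnorm{h}^2$ yields $2\ip{y}{h} \geq \tnorm{y}^2 + \epsilon \tnorm{h}^2$. Plugging this into the expansion
\[
\tnorm{y-\lambda h}^2 = \tnorm{y}^2 - 2\lambda \ip{y}{h} + \lambda^2 \tnorm{h}^2
\]
gives the upper bound $\tnorm{y-\lambda h}^2 \le (1-\lambda)\tnorm{y}^2 + (\lambda^2 - \lambda \epsilon)\tnorm{h}^2$. I would then choose $\lambda$ so as to annihilate the coefficient of $\tnorm{h}^2$, namely $\lambda = \epsilon$. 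The bound collapses to $(1-\epsilon)\tnorm{y}^2$, which is at most $(1-\epsilon)R^2$ by the first ball constraint. In the original coordinates, this gives the center $c = x + \epsilon h$.

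The main obstacle I anticipate is simply identifying this correct value of $\lambda$. A priori one might expect $c$ to depend on both $R$ and $\tnorm{h}$, but since the hypothesis imposes no relation between these quantities, the coefficient of $\tnorm{h}^2$ in the upper bound must vanish, which uniquely forces $\lambda = \epsilon$ (the other root $\lambda=0$ being useless). Once this ansatz is made, everything else is a short completion-of-squares manipulation on the two defining inequalities, and the proof is essentially a two-line computation.
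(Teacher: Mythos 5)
Your proof is correct: with $c = x + \epsilon h$, the linearized form of the second ball constraint, $2\ip{y-x}{h} \geq \tnorm{y-x}^2 + \epsilon\tnorm{h}^2$, combined with $\tnorm{y-x}^2 \leq R^2$ immediately gives $\tnorm{y-c}^2 \leq (1-\epsilon)R^2$. The paper itself imports this lemma from \cite{bubLeeSin} without reproving it, and your argument is essentially the standard one given there, with the same explicit center.
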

\smallskip
\noindent
An application of Lemma~\ref{lem:ballIntersection1} yields a
new center $c_1$ with
\[ \B{c_0}{R_0^2} \cap \B{c_0^{++}}{ \left(1 - \frac{1}{\kappa} \right) \frac{\tnorm{\nabla f(c_0)}^2}{\alpha^2}} \subset \B{c_1}{ \left(1 - \frac{1}{\kappa} \right) R_0^2}. \]
Repeating the procedure with the new ball $\B{c_1}{\left(1 - \frac{1}{\kappa} \right) R_0^2}$ yields a sequence of centers $c_k$ satisfying
\[ \tnorm{c_k - x^*}^2 \leq \left( 1 - \frac{1}{\kappa} \right)^k R_0^2. \]
We note that the centers $c_k$ and $R_0^2$ of the minimal enclosing balls in Lemma~\ref{lem:ballIntersection1} are easy to compute; see Algorithm~1 in \cite{bubLeeSin}.

\begin{figure}[!ht]
	\centering
	\includegraphics[width=0.4\textwidth]{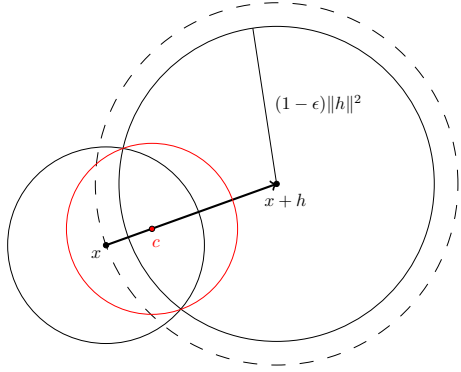}
	\caption{Minimal enclosing ball of the intersection.}
	\label{fig:illust_lemma}
\end{figure}

There is a very close connection between finding the minimal enclosing ball of the intersection of two balls and of optimally averaging quadratics. To see this, consider again two quadratics 
\[ f(x) \geq Q_A(x) := v_A + \frac{\alpha}{2} \tnorm{ x - x_A }^2 \qquad \text{and} \qquad f(x) \geq Q_B(x) := v_B + \frac{\alpha}{2} \tnorm{ x - x_B }^2. \] 
Let $\overline{Q}$ be the optimal average of $Q_A$ and $Q_B$.
Notice that since $Q_A$, $Q_B$, and $\overline Q$  lower bound $f$, the minimizer $x^*$ of $f$ is guaranteed to lie in the three balls:
\begin{align*}
\B{x_A}{R_A^2} &\quad \text{where} \quad R_A^2 = \frac{2}{\alpha} \left( \hat f - v_A \right), \\
\B{x_B}{R_B^2} &\quad \text{where} \quad R_B^2 = \frac{2}{\alpha} \left( \hat f - v_B \right),  \\
\B{\bar c}{ R^2} &\quad \text{where} \quad R^2 = \frac{2}{\alpha} \left( \hat f - \bar v\right),
\end{align*}
where $\hat f$ is any upper bound on $f^*$.
We observe the following elementary fact. 

\begin{prop}[Minimal enclosing ball and optimal averaging]\label{prop:rel_min_enc_opt_ave} 
The ball $\B{\bar c}{R^2}$ is precisely the minimal enclosing ball of the intersection $\B{x_A}{R_A^2} \cap \B{x_B}{R_B^2}$.
\end{prop}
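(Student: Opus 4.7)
The plan is to establish both directions of the minimum enclosing ball characterization: first that $B(\bar c, R^2)$ contains the intersection, and second that no ball of smaller radius does so. The bridge between the quadratic-averaging viewpoint and the minimum-enclosing-ball viewpoint is the polarization-type identity
\[
\|x - c_\lambda\|^2 = \lambda \|x - x_A\|^2 + (1-\lambda)\|x - x_B\|^2 - \lambda(1-\lambda)\|x_A - x_B\|^2,
\]
valid for all $x \in \R^n$ and $\lambda \in \R$, which I would verify once by direct expansion and then use throughout.

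For inclusion, I would pick any $x$ in $B(x_A, R_A^2) \cap B(x_B, R_B^2)$. By the definitions of $R_A^2$ and $R_B^2$, the bounds $Q_A(x) \le \hat f$ and $Q_B(x) \le \hat f$ hold. Averaging these with weight $\bar\lambda$ gives $\overline Q(x) \le \hat f$, and rearranging in the canonical form of $\overline Q$ yields $\|x - \bar c\|^2 \le \tfrac{2}{\alpha}(\hat f - \bar v) = R^2$. This is the same ``convex combination of lower bounds'' mechanism used in the proof of Theorem~\ref{thm:optAvgConvergence}.

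For minimality, I would focus first on the generic case $\bar\lambda \in (0,1)$, where the two bounding spheres intersect transversally in an $(n-2)$-sphere $\Sigma \subseteq B(x_A, R_A^2) \cap B(x_B, R_B^2)$. Substituting $\lambda = \bar\lambda$ and $x \in \Sigma$ into the identity above collapses its right-hand side to the constant $\bar\lambda R_A^2 + (1-\bar\lambda) R_B^2 - \bar\lambda(1-\bar\lambda)\|x_A - x_B\|^2$; a short calculation using $v_A - v_B = \tfrac{\alpha}{2}(R_B^2 - R_A^2)$ together with the explicit formula for $\bar\lambda$ shows this equals $R^2$. A parallel calculation, solving $(1-2\mu)\|x_A - x_B\|^2 = R_A^2 - R_B^2$ for the affine projection of $x_A$ onto the hyperplane containing $\Sigma$, identifies the center of $\Sigma$ with $\bar c$. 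Antipodal pairs on $\Sigma$ are therefore diametrically opposite on the sphere $\partial B(\bar c, R^2)$, producing two points of the intersection at mutual distance $2R$; no enclosing ball of smaller radius can contain such a pair.

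The main obstacle lies in these twin calculations: the same scalar $\bar\lambda$ must simultaneously center $\Sigma$ and make the radius expression equal $R$. This double coincidence is exactly what makes optimal averaging and minimum enclosure produce the same ball rather than merely comparable ones. The boundary cases $\bar\lambda \in \{0,1\}$ correspond to one ball being contained in the other, so that the intersection reduces to the smaller ball and the claimed equality degenerates to a triviality; I would dispatch them with a one-line remark at the end.
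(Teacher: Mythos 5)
Your proof is correct, but it takes a genuinely different and more self-contained route than the paper's. The paper's proof computes the explicit formulas for $\bar c$ and $R^2$ from Lemma~\ref{lem:optAvg} (splitting into the cases $\hat\lambda\in[0,1]$, $\hat\lambda<0$, $\hat\lambda>1$) and then simply observes that these expressions match, case by case, the recipe of \cite[Algorithm~1]{bubLeeSin} for the minimal enclosing ball of two balls; the geometric content is thus outsourced to that reference. You instead verify both defining properties directly: containment via the observation that $\{Q_A\le\hat f\}\cap\{Q_B\le\hat f\}\subset\{Q_{\bar\lambda}\le\hat f\}$ for any convex weight, and minimality by exhibiting the equatorial $(n-2)$-sphere $\Sigma=\partial\B{x_A}{R_A^2}\cap\partial\B{x_B}{R_B^2}$, whose center and radius your polarization identity shows to be exactly $\bar c$ and $R$, so an antipodal pair of $\Sigma$ pins down both the radius and the center of any enclosing ball of radius $R$. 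This buys independence from the external reference and makes transparent why optimal averaging and minimal enclosure produce the same ball. One small gap to patch: $\bar\lambda\in(0,1)$, i.e.\ $\tnorm{x_A-x_B}^2>\abs{R_A^2-R_B^2}$, yields $\tnorm{x_A-x_B}>\abs{R_A-R_B}$ but not $\tnorm{x_A-x_B}<R_A+R_B$, so the transversal intersection you assert is not automatic from $\bar\lambda$ alone. In the setting of the proposition you should invoke that both balls contain $x^*$ to exclude $\tnorm{x_A-x_B}>R_A+R_B$, and handle the tangent case $\tnorm{x_A-x_B}=R_A+R_B$ by noting that your radius formula then gives $R=0$ with $\bar c$ equal to the single point of tangency, so the claim is trivial there; the antipodal-pair step also tacitly requires $n\ge 2$. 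With those remarks added, the argument is complete.
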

\begin{proof}
Define the quantity $\hat{\lambda} = \frac{1}{2} + \frac{v_A - v_B}{\alpha \tnorm{x_A-x_B}^2}$.
If $\hat{\lambda}$ lies in the unit interval $[0,1]$, then a quick computation using Lemma~\ref{lem:optAvg} shows the expressions
\begin{align*}
R^2 &= R_B^2 -  \frac{\left( \tnorm{x_A - x_B}^2 + R_B^2 - R_A^2 \right)^2}{4 \tnorm{x_A-x_B}^2}
\end{align*}
and
\begin{align*}
\bar c &= \bar \lambda x_A + (1- \bar \lambda ) x_B = \frac{1}{2} \left( x_A + x_B \right) - \frac{ R_A^2 - R_B^2 }{2 \tnorm{x_A - x_B}^2 } \left( x_A - x_B \right).
\end{align*}
Now observe
\begin{align*}
\hat{\lambda} < 0 &\quad \text{if and only if} \quad \tnorm{x_A-x_B}^2 < R_A^2 - R_B^2 \\
\hat{\lambda} \in [0,1] &\quad \text{if and only if} \quad \tnorm{x_A-x_B}^2 \geq \abs{R_A^2 - R_B^2}, \text{ and} \\
\hat{\lambda} > 1 &\quad \text{if and only if} \quad  \tnorm{x_A-x_B}^2 < R_B^2 - R_A^2.
\end{align*}
Comparing with the recipe \cite[Algorithm~1]{bubLeeSin} for computing the minimal enclosing ball, we see that $\B{\bar c}{R^2}$ is the minimal enclosing ball of the intersection $\B{x_A}{R_A^2} \cap \B{x_B}{R_B^2}$. 
\end{proof}

\subsection{Optimal geometric descent method}\label{subsec:opt_geo}
To obtain an optimal method, the authors of \cite{bubLeeSin} observe that the term $\frac{2}{\alpha} \left( f(x^+) - f^* \right)$ in the inclusion \eqref{eqn:main_inc} cannot be ignored. 
Exploiting this term will require maintaining two sequences $c_k$ (the centers of the balls) and $x_k$ (points for generating new balls).
Suppose in iteration $k$, we know that $x^*$ lies in the ball
\[ \B{c_k}{R_k^2 - \frac{2}{\alpha} \left( f(x_k^+) - f^* \right) }.\]
Consider now an arbitrary point, denoted suggestively by $x_{k+1}$. Then  
\eqref{eqn:main_inc} implies the inclusion
\begin{equation}\label{eqn:ball2}
 x^* \in \B{x_{k+1}^{++}}{ \left(1 - \frac{1}{\kappa} \right) \frac{\tnorm{\nabla f(x_{k+1})}^2}{\alpha^2} - \frac{2}{\alpha} \left( f(x_{k+1}^+) - f^* \right)  }. 
\end{equation}
If we choose $x_{k+1}$ to satisfy $f(x_{k+1}) \leq f(x_k^+)$ and apply inequality \eqref{eqn:desc} with $x=x_{k+1}$, we can get a new upper estimate of the initial ball,
\begin{equation}\label{eqn:ball1}
 x^* \in \B{c_k}{R_k^2 - \frac{1}{\kappa}~\frac{ \tnorm{\nabla f(x_{k+1})}^2 }{\alpha^2} - \frac{2}{\alpha} \left( f(x_{k+1}^+) - f^* \right)}.
\end{equation}
It seems clear that if the centers $c_k$ and $x_{k+1}^{++}$ of the two balls in \eqref{eqn:ball2} and \eqref{eqn:ball1} are ``sufficiently far apart'', then their intersection is contained in an even smaller ball. 
This is the content of following lemma from \cite{bubLeeSin}.
\begin{lem}[Two balls shrinking] \label{lem:ballIntersection2}
	Fix centers $x_A, x_B \in \R^n$ and square radii $r_A^2, r_B^2 > 0$.
	Also fix $\epsilon \in (0,1)$ and suppose $\tnorm{x_A - x_B}^2 \geq r_B^2$.
	Then there exists a new center $c \in \R^n$ such that for any $\delta > 0$, we have
	\[  \B{x_A}{r_A^2 - \epsilon r_B^2 - \delta} \cap \B{x_B}{(1- \epsilon) r_B^2 - \delta} \subset \B{c}{(1 - \sqrt{\epsilon}) r_A^2  - \delta}. \]
\end{lem}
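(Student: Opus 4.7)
The plan is to exhibit a center $c$ on the segment from $x_A$ to $x_B$ that is independent of $\delta$, then reduce the desired inclusion to a single polynomial inequality. Write $d := \|x_A - x_B\|$, $R_A^2 := r_A^2 - \epsilon r_B^2 - \delta$, and $R_B^2 := (1-\epsilon) r_B^2 - \delta$, and observe crucially that the difference $R_A^2 - R_B^2 = r_A^2 - r_B^2$ is free of $\delta$. The starting point is the elementary identity
\[
\|y - c_\lambda\|^2 = \lambda \|y - x_A\|^2 + (1-\lambda)\|y - x_B\|^2 - \lambda(1-\lambda) d^2,
\]
valid for all $y \in \R^n$ and $\lambda \in \R$, where $c_\lambda := \lambda x_A + (1-\lambda) x_B$. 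For $\lambda \in [0,1]$ this gives $\B{x_A}{R_A^2} \cap \B{x_B}{R_B^2} \subseteq \B{c_\lambda}{\lambda R_A^2 + (1-\lambda) R_B^2 - \lambda(1-\lambda) d^2}$.

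The right-hand radius is minimized at $\lambda^* = \tfrac{1}{2} + \tfrac{r_B^2 - r_A^2}{2d^2}$, a quantity that depends only on the inputs and lies in $[0,1]$ precisely when $|r_A^2 - r_B^2| \leq d^2$. In the principal case, take $c := c_{\lambda^*}$; substituting and simplifying, the enclosing radius becomes
\[
\tfrac{R_A^2 + R_B^2}{2} - \tfrac{d^2}{4} - \tfrac{(r_A^2 - r_B^2)^2}{4d^2},
\]
so the $\delta$ contributions on both sides of the target inequality cancel, and the claim reduces to
\[
G(d^2) := d^4 - 2 p\, d^2 + (r_A^2 - r_B^2)^2 \;\geq\; 0, \qquad p := (1-2\epsilon) r_B^2 - (1-2\sqrt{\epsilon}) r_A^2,
\]
for all $d^2 \geq r_B^2$. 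The linchpin is the algebraic identity $G(r_B^2) = (r_A^2 - 2\sqrt{\epsilon}\, r_B^2)^2$, which is manifestly nonnegative. Viewing $G$ as a quadratic in $d^2$ with vertex at $p$, a short case split finishes off the bound: if $p \leq r_B^2$, then $G$ is nondecreasing on $[r_B^2, \infty)$ and is bounded below by $G(r_B^2) \geq 0$; if $p > r_B^2$, an elementary check using only $\sqrt{\epsilon} \leq 1$ shows $p \leq |r_A^2 - r_B^2|$, whence the discriminant of $G$ is nonpositive and $G \geq 0$ everywhere.

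It remains to handle the residual case $|r_A^2 - r_B^2| > d^2$, which together with $d^2 \geq r_B^2$ forces $r_A^2 > 2 r_B^2$. Here, take the trivial choice $c := x_B$, so that the inclusion reduces to $(1-\epsilon) r_B^2 \leq (1-\sqrt{\epsilon}) r_A^2$; this follows from $r_A^2 > 2 r_B^2$ together with the elementary inequality $2(1-\sqrt{\epsilon}) \geq 1 - \epsilon$ (i.e., $(1-\sqrt{\epsilon})^2 \geq 0$). I expect the main obstacle to be discovering the identity $G(r_B^2) = (r_A^2 - 2\sqrt{\epsilon}\, r_B^2)^2$: this is the only nonroutine piece of algebra, and it is precisely what makes the proof go through, simultaneously pinpointing the tight configuration $r_A^2 = 2\sqrt{\epsilon}\, r_B^2$ and ensuring that no $\delta$-dependence creeps into the choice of $c$.
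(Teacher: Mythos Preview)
The paper does not supply its own proof of this lemma: it is quoted verbatim from \cite{bubLeeSin} and cited without argument. Consequently there is no in-paper proof to compare against.

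Your argument is correct. The convex-combination identity you invoke is exactly the mechanism behind the paper's Lemma~\ref{lem:optAvg} and Proposition~\ref{prop:rel_min_enc_opt_ave}, so your reduction to the quadratic $G(d^2)=d^4-2p\,d^2+(r_A^2-r_B^2)^2$ is both natural and well aligned with the optimal-averaging viewpoint developed here. The key algebraic step, $G(r_B^2)=(r_A^2-2\sqrt{\epsilon}\,r_B^2)^2$, checks out directly, and your Case~2 claim that $p>r_B^2$ forces $p\le |r_A^2-r_B^2|$ is justified as follows: $p>r_B^2$ is impossible unless $\epsilon>\tfrac14$, in which case it rearranges to $r_A^2>\frac{2\epsilon}{2\sqrt{\epsilon}-1}\,r_B^2\ge (1+\sqrt{\epsilon})\,r_B^2$ (the last inequality being equivalent to $\sqrt{\epsilon}\le 1$), and then $(1-\sqrt{\epsilon})r_A^2\ge(1-\epsilon)r_B^2$ gives $p\le r_A^2-r_B^2=|r_A^2-r_B^2|$. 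The residual case $|r_A^2-r_B^2|>d^2\ge r_B^2$ indeed forces $r_A^2>2r_B^2$, and your choice $c=x_B$ then works since $(1-\epsilon)r_B^2\le 2(1-\sqrt{\epsilon})r_B^2<(1-\sqrt{\epsilon})r_A^2$. Finally, in both cases the center $c$ depends only on $x_A,x_B,r_A,r_B$ and not on $\delta$, as required by the quantifier order in the statement.
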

 A quick application of this result shows that provided 
 \begin{equation}\label{eqn:separat}
  \tnorm{x_{k+1}^{++} - c_k}^2 \geq \frac{\tnorm{\nabla f(x_{k+1})}^2}{\alpha^2}
  \end{equation}
holds, there exists a new center $c_{k+1}$ with
\[ x^* \in \B{c_{k+1}}{\left( 1 - \frac{1}{\sqrt{\kappa}} \right) R_k^2 - \frac{2}{\alpha} \left( f(x_{k+1}^+) - f^* \right)}. \]

One way to ensure that $x_{k+1}$ satisfies the two key conditions, $f(x_{k+1}) \leq f(x_k^+)$ and inequality \eqref{eqn:separat}, is to simply let $x_{k+1}$ be the minimizer of $f$ along the line between $c_k$ and $x_k^+$.
Trivially this guarantees the inequality $f(x_{k+1}) \leq f(x_k^+)$, while the univariate optimality condition  $\nabla f(x_{k+1})\perp (c_k-x_{k+1})$ means the triangle with vertices $x_{k+1}$, $x_{k+1}^{++}$, and $c_k$ is a right triangle and inequality \eqref{eqn:separat} becomes ``the hypotenuse is longer than a leg.''
This is exactly the motivation for the line-search procedure in Algorithm~\ref{alg:optQuadAvg}. 
Repeating the process yields iterates $c_k$ that satisfy the optimal linear rate of convergence
\[ \tnorm{c_k - x^*}^2 \leq  \left( 1 - \frac{1}{\sqrt{\kappa}} \right)^k R_0^2. \] 
The precise method is described in Algorithm~\ref{alg:geoD}.

\begin{rem}
	{\rm
When applying an iterative method to compute $x_{k+1}=\ls{c_{k}}{x_{k}^+}$, one can use the following termination criterion. Check if $c_k$  satisfies $f(c_k)\leq f(x_k^+)$, then stop and set $x_{k+1}:=c_k$. Notice \eqref{eqn:separat} holds trivially with this choice of $x_{k+1}$. Else stop with a trial point $z$ on the line joining $c_{k}$ and $x_{k}^+$ satisfying $f(z)\leq f(x_k^+)$ and 
 \begin{equation*}
 \tnorm{z^{++} - c_k}^2 \geq \frac{\tnorm{\nabla f(z)}^2}{\alpha^2}.
 \end{equation*}
We claim that the line search will terminate in finite time, unless  $\ls{c_{k}}{x_{k}^+}$ is the true minimizer of $f$. Indeed, since $c_k\neq \ls{c_{k}}{x_{k}^+}$ (otherwise we would have terminated in the if clause), one can easily check that  $z=\ls{c_{k}}{x_{k}^+}$ satisfies the above inequality strictly.
}
\end{rem}

\begin{algorithm}
\caption{Geometric Descent Method [Bubeck, Lee, Singh]}  \label{alg:geoD}
\KwIn{Starting point $x_0$, strong convexity constant $\alpha>0$.}
\KwOut{ $x_K^+$ }
Set $c_0 = x_0^{++}$ and $R_0^2 = \frac{ \tnorm{\nabla f(x_0)}^2}{\alpha^2} - \frac{2}{\alpha} \left( f(x_0) - f(x_0^+) \right)$ \;
\For{k = 1, \ldots, K}{ 
Set $x_k = \ls{x_{k-1}^+}{c_{k-1}}$ \;
Set $x_A = x_k - \alpha^{-1} \nabla f(x_k)$ and $R_A^2 = \frac{ \tnorm{\nabla f(x_k)}^2}{\alpha^2} - \frac{2}{\alpha} \left( f(x_k) - f(x_k^+) \right)$ \;
Set $x_B = c_{k-1}$ and $R_B^2 = R_{k-1}^2 - \frac{2}{\alpha} \left( f(x_{k-1}^+) - f(x_k^+) \right)$ \;
Let $\B{c_k}{ R_k^2}$ be the 
smallest enclosing ball of $\B{x_A}{R_A^2} \cap \B{x_B}{ R_B^2}$ \;}
\end{algorithm}

The following theorem shows that Algorithm~\ref{alg:optQuadAvg} and Algorithm~\ref{alg:geoD} indeed produce the same iterate sequence.

\begin{thm}
Given the same initial point $x_0$, Algorithm~\ref{alg:optQuadAvg} and Algorithm~\ref{alg:geoD} produce the same iterates $x_k$ and $c_k$.
Moreover, we have $v_k = f(x_k^+) - \frac{\alpha}{2} R_k^2$, where $v_k$ is the minimum value of the quadratic $Q_k$ in Algorithm~\ref{alg:optQuadAvg} and $R_k$ is the radius of the ball in Algorithm~\ref{alg:geoD}.
\end{thm}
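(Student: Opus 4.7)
The plan is to proceed by induction on $k$, using Proposition~\ref{prop:rel_min_enc_opt_ave} as the bridge between the two algorithms. The key dictionary is: a quadratic $v + \tfrac{\alpha}{2}\|\cdot-c\|^2$ that lower-bounds $f$ corresponds, via any upper bound $\hat f \geq f^*$, to a ball $B\!\left(c, \tfrac{2}{\alpha}(\hat f - v)\right)$ containing $x^*$. Throughout the induction we will fix $\hat f = f(x_k^+)$ in the $k$th step and verify that the three data pairs (the two incoming quadratics and the optimally averaged one in Algorithm~\ref{alg:optQuadAvg}) translate exactly to the three balls (the two intersected and the minimal enclosing one in Algorithm~\ref{alg:geoD}).

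For the base case $k=0$, the two algorithms agree on $c_0 = x_0^{++}$ by construction. A direct check gives
\[ f(x_0^+) - \frac{\alpha}{2}R_0^2 = f(x_0^+) - \frac{\|\nabla f(x_0)\|^2}{2\alpha} + f(x_0) - f(x_0^+) = v_0, \]
so $v_0 = f(x_0^+) - \tfrac{\alpha}{2}R_0^2$ as required.

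For the inductive step, assume $c_{k-1}$, $x_{k-1}$, and $x_{k-1}^+$ agree in both algorithms, and that $v_{k-1} = f(x_{k-1}^+) - \tfrac{\alpha}{2}R_{k-1}^2$. The iterate $x_k$ is produced by the same line search $\ls{c_{k-1}}{x_{k-1}^+}$ in both schemes, so the $x_k$ sequences agree as well. Now compare the two pairs of inputs. In Algorithm~\ref{alg:optQuadAvg}, we average $Q$ (with center $x_k^{++}$ and value $v_A = f(x_k) - \tfrac{\|\nabla f(x_k)\|^2}{2\alpha}$) with $Q_{k-1}$ (center $c_{k-1}$, value $v_{k-1}$). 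Translating via $\hat f = f(x_k^+)$:
\[ \tfrac{2}{\alpha}(f(x_k^+)-v_A) = \tfrac{\|\nabla f(x_k)\|^2}{\alpha^2} - \tfrac{2}{\alpha}(f(x_k)-f(x_k^+)) = R_A^2, \]
\[ \tfrac{2}{\alpha}(f(x_k^+)-v_{k-1}) = R_{k-1}^2 - \tfrac{2}{\alpha}(f(x_{k-1}^+)-f(x_k^+)) = R_B^2, \]
by the induction hypothesis. Thus the two quadratics being averaged correspond exactly to the two balls being intersected.

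Applying Proposition~\ref{prop:rel_min_enc_opt_ave}, the center of the optimal average equals the center $c_k$ of the minimal enclosing ball, and the averaged value $\bar v$ and radius $R^2$ satisfy $\bar v = f(x_k^+) - \tfrac{\alpha}{2}R^2$. Identifying $\bar v = v_k$ and $R^2 = R_k^2$ closes the induction. The main thing to be careful about is that Proposition~\ref{prop:rel_min_enc_opt_ave} is a statement about a fixed $\hat f$ that upper bounds $f^*$; the slightly subtle point is that we must use a common $\hat f = f(x_k^+)$ at step $k$, even though the induction hypothesis expresses $R_{k-1}^2$ in terms of $f(x_{k-1}^+)$. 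The algebraic identity above absorbs the difference $f(x_{k-1}^+) - f(x_k^+)$ into $R_B^2$ in precisely the way Algorithm~\ref{alg:geoD} defines it, so everything lines up. No further case analysis (e.g.\ one ball containing the other) is needed, since Proposition~\ref{prop:rel_min_enc_opt_ave} already handled all positions of $\hat\lambda$ relative to $[0,1]$.
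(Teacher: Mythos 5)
Your proposal is correct and follows essentially the same route as the paper's proof: induction on $k$, the translation of quadratics to balls via the common upper bound $\hat f = f(x_k^+)$, verification that $R_A^2 = \tfrac{2}{\alpha}(f(x_k^+)-v_A)$ and $R_B^2 = \tfrac{2}{\alpha}(f(x_k^+)-v_{k-1})$, and an appeal to Proposition~\ref{prop:rel_min_enc_opt_ave}. Your remark about absorbing the offset $f(x_{k-1}^+)-f(x_k^+)$ into $R_B^2$ is exactly the point the paper handles implicitly.
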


\begin{proof}
Let $x_k$ and $c_k$ denote the iterates in Algorithm~\ref{alg:optQuadAvg}, and let $\hat{x}_k$ and $\hat{c}_k$ be the iterates in Algorithm~\ref{alg:geoD}.
We proceed by induction on $k$.
It follows immediately from the definition of the algorithms that $x_0 = \hat{x}_0$, $c_0 = \hat{c}_0$, and $v_0 = f(x_0^+) - \frac{\alpha}{2} R_0^2$.
Now suppose, as an inductive assumption, $x_{k-1}=\hat{x}_{k-1}$, $c_{k-1}=\hat{c}_{k-1}$, and $v_{k-1}=f(x_{k-1}^+)-\frac{\alpha}{2}R_{k-1}^2$.
To see the equality $x_k = \hat{x}_k$, observe
\begin{align*}
x_k &= \ls{x_{k-1}^+}{c_{k-1}} = \ls{\hat{x}_{k-1}^+}{\hat{c}_{k-1}} = \hat{x}_k.
\end{align*}
Let $x_A = x_k^{++}$, $x_B = c_{k-1}$, $d = \norm{x_A - x_B}$, and define the quantities

\vspace{5pt}

\begin{tabular}{lll}
{ $\begin{aligned}
v_A &= f(x_k) - \frac{\norm{\nabla f(x_k)}^2}{2 \alpha}, \\
v_B &= v_{k-1}, \\
\end{aligned}$ } & &

{ $\begin{aligned}
R_A^2 &= \frac{\norm{\nabla f(x_k)}^2}{\alpha^2} - \frac{2}{\alpha} \left( f(x_k) - f(x_k^+) \right), \\
R_B^2 &= R_{k-1}^2 - \frac{2}{\alpha} \left( f(x_{k-1}^+) - f(x_k^+) \right). 
\end{aligned}$ }
\end{tabular}

\vspace{5pt}

Notice that $Q_k(x) = v_k + \frac{\alpha}{2} \norm{ x - c_k}^2$ is the optimal averaging of $Q_A(x) := v_A + \frac{\alpha}{2} \norm{x - x_A}^2$ and $Q_B(x) := v_B + \frac{\alpha}{2} \norm{x- x_B}^2$, and that $B(\hat{c}_k, R_k^2)$ is the minimum enclosing ball of the intersection of $B(x_A, R_A^2)$ and $B(x_B, R_B^2)$.
Simple algebra shows the relation 
\[ R_A^2 = \frac{2}{\alpha} \left( f(x_k^+) - v_A \right), \]
and from the inductive assumption $v_{k-1} = f(x_{k-1}^+) - \frac{\alpha}{2} R_{k-1}^2$, we also have 
\[ R_B^2 = \frac{2}{\alpha} \left( f(x_k^+) - v_B \right). \]
Thus, by Proposition~\ref{prop:rel_min_enc_opt_ave} and the discussion preceding it, we have $c_k = \hat{c}_k$ and $v_{k} = f(x_{k}^+) - \frac{\alpha}{2} R_{k}^2$. This completes the induction.
\end{proof}


As we saw in Section~\ref{sec:opt_quad_mem},  computing the optimal averaging of several quadratic functions is simple.
On the other hand, it is far from clear how to find the minimum radius ball that encloses the intersection of more than two balls.
Indeed, instead the authors of Algorithm~\ref{alg:geoD} in the follow-up work \cite{bubLeenew}  considered a ``relaxation'' that involves minimizing a self-concordant barrier for the intersection.
While revising the current manuscript, we became aware 
that Beck in \cite[Theorem 3.2]{beck_smallball} proved that the minimum enclosing ball of the intersection of finitely many balls can be computed by solving a convex quadratic program (QP). Namely, Beck showed that the squared radius of the minimal ball enclosing the intersection $\bigcap^t_{i=1} B(c_i,r_i^2)$ is exactly equal to 
$$\min_{\lambda\in \Delta_t}~ \left\|\sum_{i=1}^t \lambda_i c_i\right\|^2-\sum_{i=1}^t \lambda_{i}(\left\|a_i\right\|^2-r_i^2),$$
provided $t\leq n-1$ and the intersection of the balls has nonempty interior. This QP is exactly the one we derived in Section~\ref{sec:opt_quad_mem} for the optimal quadratic averaging method with memory. Note that our derivation  of the QP in  Section~\ref{sec:opt_quad_mem} was completely elementary; the proof of \cite[Theorem 3.2]{beck_smallball}, on the other hand, is much more sophisticated relying on an S-lemma-type result.


\begin{prop}[Optimal quadratic averaging \& minimal enclosing ball] {\hfill \\ }
Let $Q(x) = v + \frac{\alpha}{2} \norm{x - c}^2$ be the optimal averaging of quadratics $Q_i(x) = v_i + \frac{\alpha}{2} \norm{x - c_i}^2$ for $i = 1, \ldots, t$ with $t<n$. 
Fix a real number $s\geq v_i$ for all $i=1\ldots,t$ and define the balls $B_i := \{ Q_i \leq s \}$. Then provided that the intersection $\bigcap_{i=1}^t B_i$ has a nonempty interior, the ball $B := \{ Q \leq s \}$ is the minimal enclosing ball of the intersection $\bigcap_{i=1}^t B_i$.
\end{prop}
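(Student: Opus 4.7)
The plan is to reduce the claim directly to Beck's result just cited, by showing that the two quadratic programs — the one maximizing $v_\lambda$ in the optimal averaging, and the one whose minimum is Beck's squared radius — are the same problem up to a positive affine transformation of the objective.

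First I would rewrite everything in standard Euclidean form. The sublevel set $B_i=\{Q_i\leq s\}$ is the ball $B(c_i,r_i^2)$ with $r_i^2=\tfrac{2}{\alpha}(s-v_i)$, and likewise $B=B(c,R^2)$ with $R^2=\tfrac{2}{\alpha}(s-v)$. The nonempty-interior hypothesis together with $s\geq v_i$ ensures $r_i^2>0$. Since $t<n$ (and the intersection has nonempty interior), Beck's theorem applies: the minimal enclosing ball of $\bigcap_{i=1}^{t}B(c_i,r_i^2)$ has squared radius equal to
\[ R_\star^2 \;=\; \min_{\lambda\in\Delta_t}\; \Big\|C\lambda\Big\|^2 - \sum_{i=1}^t \lambda_i\bigl(\|c_i\|^2-r_i^2\bigr), \]
and its center is $C\lambda^{\star}$ for any optimizer $\lambda^{\star}$.

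The key algebraic step — and essentially the only real work — is the identity
\[ \frac{2}{\alpha}\bigl(s-v_\lambda\bigr) \;=\; \|C\lambda\|^2 - \sum_{i=1}^t \lambda_i\bigl(\|c_i\|^2 - r_i^2\bigr) \qquad\text{for every }\lambda\in\Delta_t. \]
To check this, I would plug in the explicit formula $v_\lambda=\langle\tfrac{\alpha}{2}\operatorname{diag}(C^TC)+v,\lambda\rangle-\tfrac{\alpha}{2}\|C\lambda\|^2$ from Proposition~\ref{prop:optavg}, use $\sum_i\lambda_i=1$ to write $s=\sum_i\lambda_i s$, and substitute $s-v_i=\tfrac{\alpha}{2}r_i^2$; the $\|c_i\|^2$ terms collect into the advertised form.

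Once this identity is in hand, the conclusion is immediate. Maximizing $v_\lambda$ over $\Delta_t$ is the same problem as minimizing Beck's objective, so the optimal averaging weights $\bar\lambda$ coincide with Beck's minimizer. Hence $\bar c=C\bar\lambda$ is the center of the minimal enclosing ball, and $R^2=\tfrac{2}{\alpha}(s-v_{\bar\lambda})=R_\star^2$ is its squared radius. Thus $B=B(\bar c,R^2)$ is exactly that minimal enclosing ball. The only conceivable obstacle is bookkeeping in the identity above, which is routine; the dimension restriction $t<n$ and the nonempty-interior hypothesis serve only to invoke Beck's theorem cleanly.
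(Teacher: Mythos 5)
Your proposal is correct and follows essentially the same route as the paper: both express $R^2=\tfrac{2}{\alpha}(s-v)$ via the formula for $v_\lambda$ from Proposition~\ref{prop:optavg}, use $\sum_i\lambda_i=1$ to rewrite the maximization of $v_\lambda$ over $\Delta_t$ as exactly Beck's quadratic program with $r_i^2=\tfrac{2}{\alpha}(s-v_i)$, and then invoke \cite[Theorem 3.2]{beck_smallball} to identify center and radius. The algebraic identity you flag as the only real work is precisely the chain of equalities in the paper's displayed computation.
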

\begin{proof}
Let $R^2$ be the square radius of $B$ and let $R_i^2$ be the square radius of $B_i$, for $i = 1, \ldots, t$.
Using Proposition~\ref{prop:optavg}, we deduce
\begin{align*}
R^2 = \frac{2}{\alpha} (s-v)
&= \frac{2}{\alpha}\left(s-\max_{\lambda\in\Delta_t}~\left\{ \frac{\alpha}{2}\sum_{i=1}^t\lambda_i\left(\frac{\alpha}{2}\|c_i\|^2+v_i\right) -\frac{\alpha}{2}\left\|\sum_{i=1}^t \lambda_ic_i\right\|^2\right\}\right)\\
&=\min_{\lambda\in\Delta_t} ~\norm{\sum_{i=1}^t \lambda_i c_i}^2 - \sum_{i=1}^t \lambda_i \left( \norm{c_i}^2 + \frac{2}{\alpha} (v_i - s) \right) \\ 
&= \min_{\lambda\in\Delta_t}~\norm{\sum_{i=1}^t \lambda_i c_i}^2 - \sum_{i=1}^t \lambda_i \left( \norm{c_i}^2 - R_i^2 \right).
\end{align*}
The center of $B$ is $c =  \sum_{i=1}^t \lambda_i c_i$ where $\lambda$ is the minimizer of the expression above.
Comparing with \cite[Theorem 3.2]{beck_smallball}, we see that $B$ is exactly the minimum radius ball enclosing the intersection $\bigcap_{i=1}^t B_i$.
\end{proof}




\section{Numerical examples} \label{sec:numerical_examples}
In this section, we numerically illustrate optimality gap convergence in Algorithm~\ref{alg:optQuadAvg}, and explore how Algorithm~\ref{alg:optAvgWithMem}, the variant of Algorithm~\ref{alg:optQuadAvg} with memory, aids performance.
To this end, we focus on minimizing two functions: the regularized logistic loss function
\[ L(w) := \frac{1}{N} \sum_{i=1}^N \log \left( 1 + e^{-y_i w^T x_i} \right) + \frac{\alpha}{2} \tnorm{w}^2, \]
where $x_i \in \R^n$ and $y_i \in \{ \pm 1\}$ are labeled training data, and the ``world's worst'' function for first-order methods:
\[ f(x) = \frac{B}{2} \left( (1-x_1)^2 + \sum_{i=1}^{n-1} (x_i - x_{i+1})^2 + x_n^2 \right) + \frac{1}{2} \sum_{i=1}^n x_i^2 \]
(see \cite[Section 2.1.2 and Section 2.1.4]{nestBook}).
For the logistic regression examples, we use the LIBSVM \cite{libsvm}  data sets a1a ($N = 1605$, $n = 123$) and colon-cancer ($N = 62$, $n = 2000$).

\subsection{Optimality gap convergence}
From inequality \eqref{eqn:lowerBound}, we get the well-known optimality gap estimate for strongly convex functions
\begin{equation} \label{eqn:optGapEst}
f(x) - f^*\leq \frac{\tnorm{\nabla f(x)}^2 }{2 \alpha}.
\end{equation}
How does this estimate compare with the gaps $g_k := f(x_k^+) - v_k$ generated by Algorithm~\ref{alg:optQuadAvg}?
Obviously the answer depends on the point where we evaluate the gap estimate in~\eqref{eqn:optGapEst}.
Nonetheless, we can say that the gaps $g_k$ are tighter than the gaps $G_k := \frac{\tnorm{\nabla f(x_k) }^2 }{2 \alpha}$.
Indeed, by the definition of $v_k$, we trivially have $v_k \geq f(x_k) - G_k$ and thus
\[ g_k = f(x_k^+) - v_k \leq f(x_k) - v_k \leq G_k. \] 
On a relative scale, the difference between $g_k$ and $G_k$ is striking; see Figure~\ref{fig:relGap}.
Notice that $G_k$ is an optimality gap estimate before averaging, and $g_k$ is an optimality gap estimate after averaging; the plots in Figure~\ref{fig:relGap} show that optimal quadratic averaging makes great relative progress per iteration.

\begin{figure}[h!]
\begin{center}
\includegraphics[width = 3in]{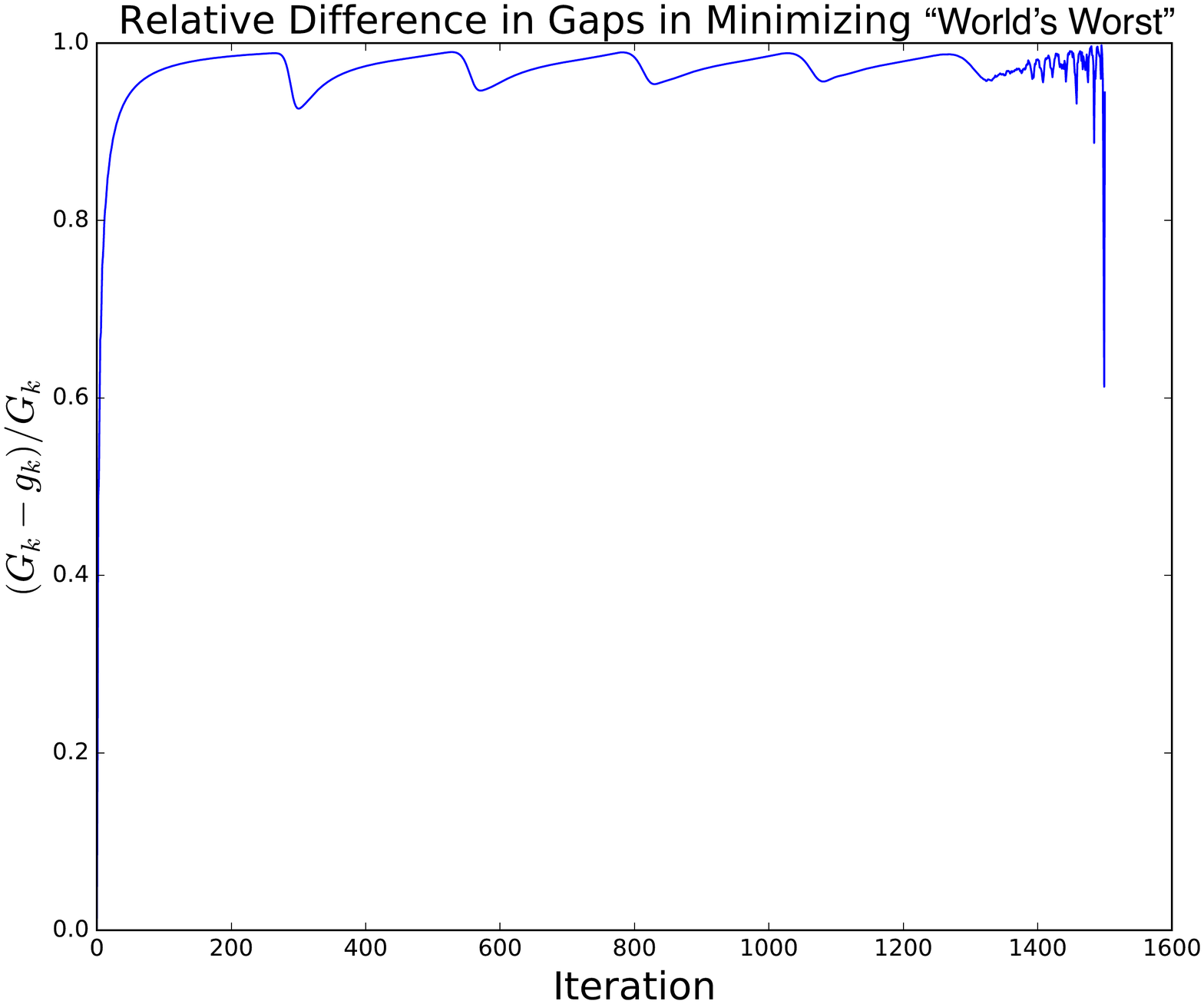} \includegraphics[width = 3in]{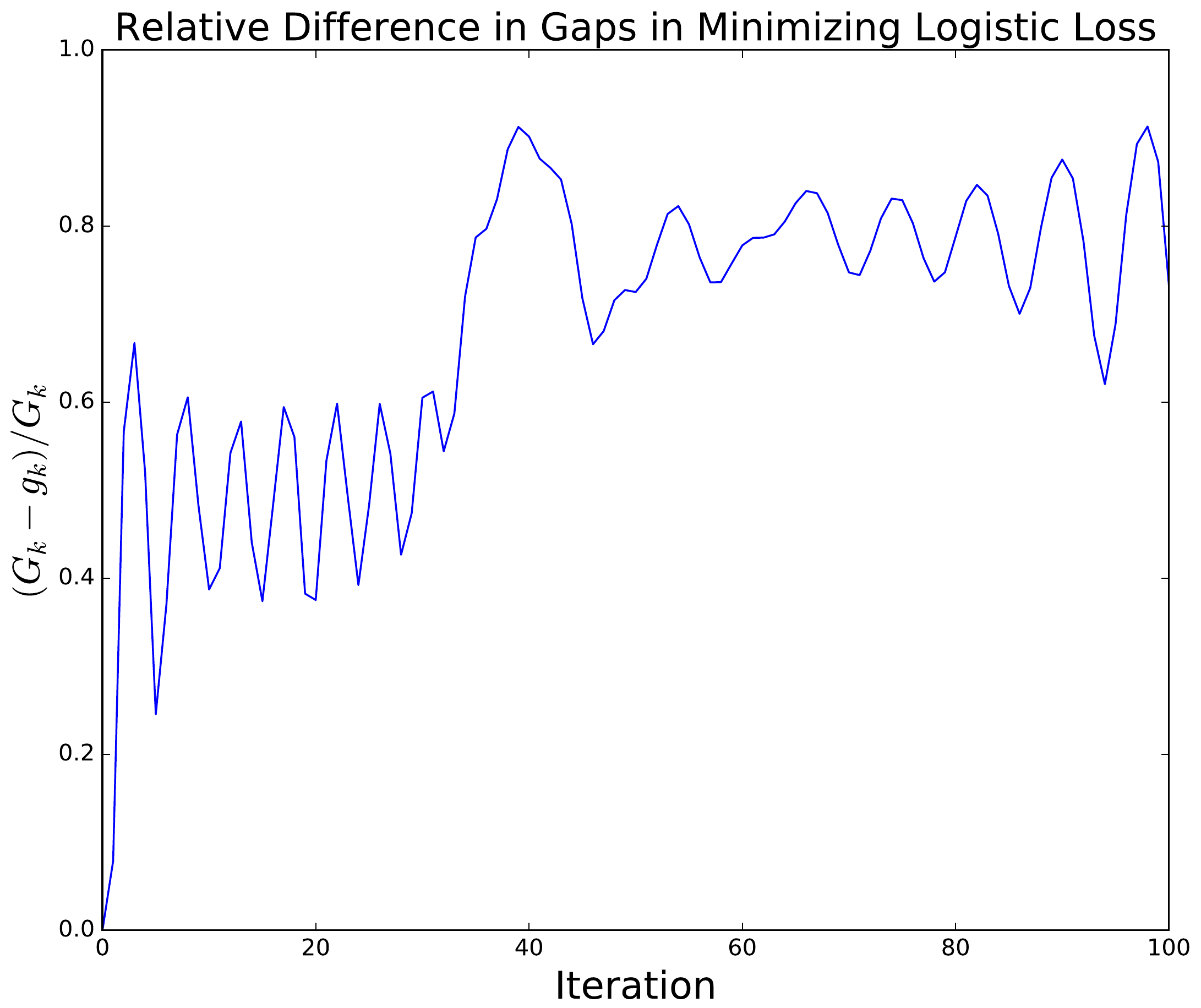}
\end{center}
\caption{Relative differences in gaps $\frac{G_k - g_k}{G_k}$ on the ``world's worst'' function ($B = 10^6$, $n = 200$), and on the logistic loss on the colon-cancer data set with regularization $\alpha = 0.0001$.} \label{fig:relGap}
\end{figure}

In Figure~\ref{fig:absGaps}, we plot $g_k$, the true gaps $f(x_k^+) - f^*$, and the gap estimate in~\eqref{eqn:optGapEst} at $x_k$, $x_k^+$, and $c_k$ for the ``world's worst'' function and the logistic loss function.
The true gaps are the tightest, albeit unknown at runtime.
Surprisingly, the gaps $\frac{\tnorm{ \nabla f(c_k)}^2}{2 \alpha}$ are quite bad: several orders of magnitude larger than $g_k$.
So even though the centers $c_k$ may appear to be the focal points of the algorithm, the points $x_k^+$ are the ones to monitor in practice.
Finally we note that the gaps $g_k$ and $\frac{\tnorm{ \nabla f(x_k^+) }^2}{2 \alpha}$ are comparable, even though $g_k$ does not rely on gradient information at $x_k^+$.

\begin{figure}[h!]
\begin{center}
\includegraphics[width = 3in]{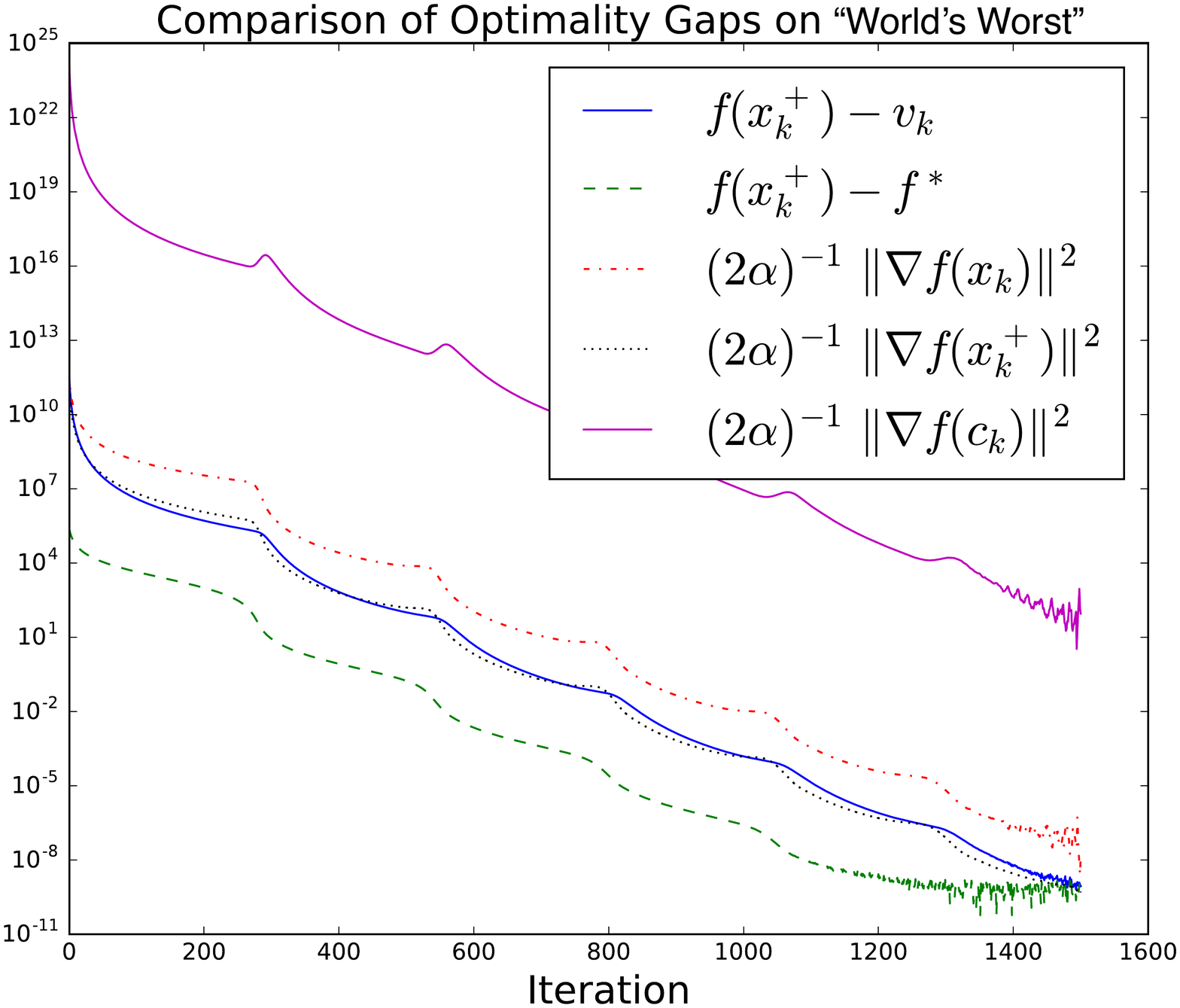} \includegraphics[width = 3in]{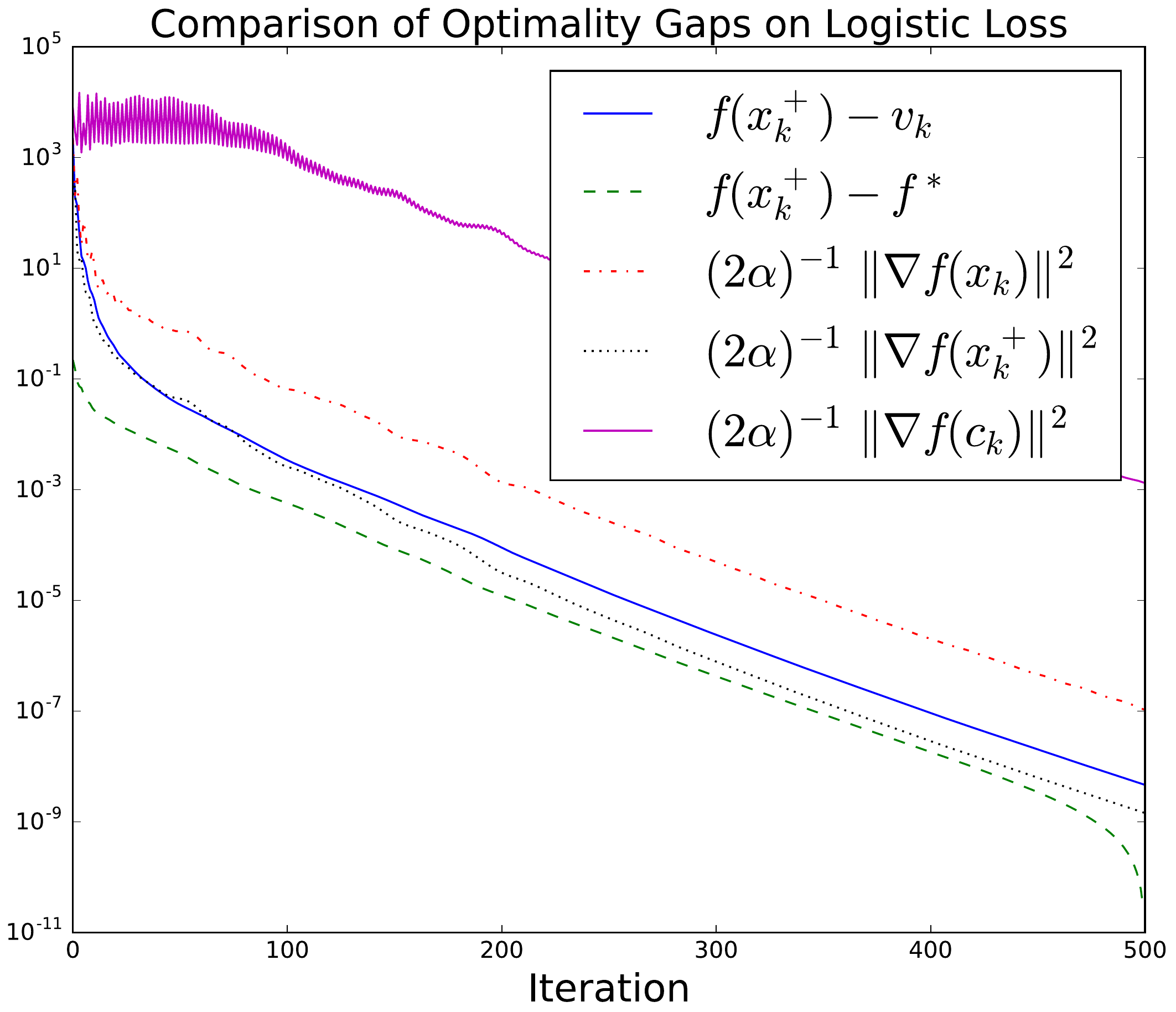}
\end{center}
\caption{Comparison of various optimality gaps on the ``world's worst'' function ($B = 10^6$, $n = 200$), and on the logistic loss on the a1a data set with regularization $\alpha = 0.0001$.} \label{fig:absGaps}
\end{figure}

\subsection{Optimal quadratic averaging with memory}
To demonstrate the effectiveness of optimal quadratic averaging with memory, we use it to minimize the logistic loss (see Figure~\ref{fig:logistic}).
The speedup over the memoryless method is significant, even when taking into account the extra work per iteration needed to solve the small dimensional quadratic subproblems.
In Figure~\ref{fig:lbfgs}, we compare Algorithm~\ref{alg:optAvgWithMem} with L-BFGS.
The two schemes are on par with each other, and neither is better than the other in all cases.

\begin{figure}[h!]
\begin{center}
\includegraphics[width=3in]{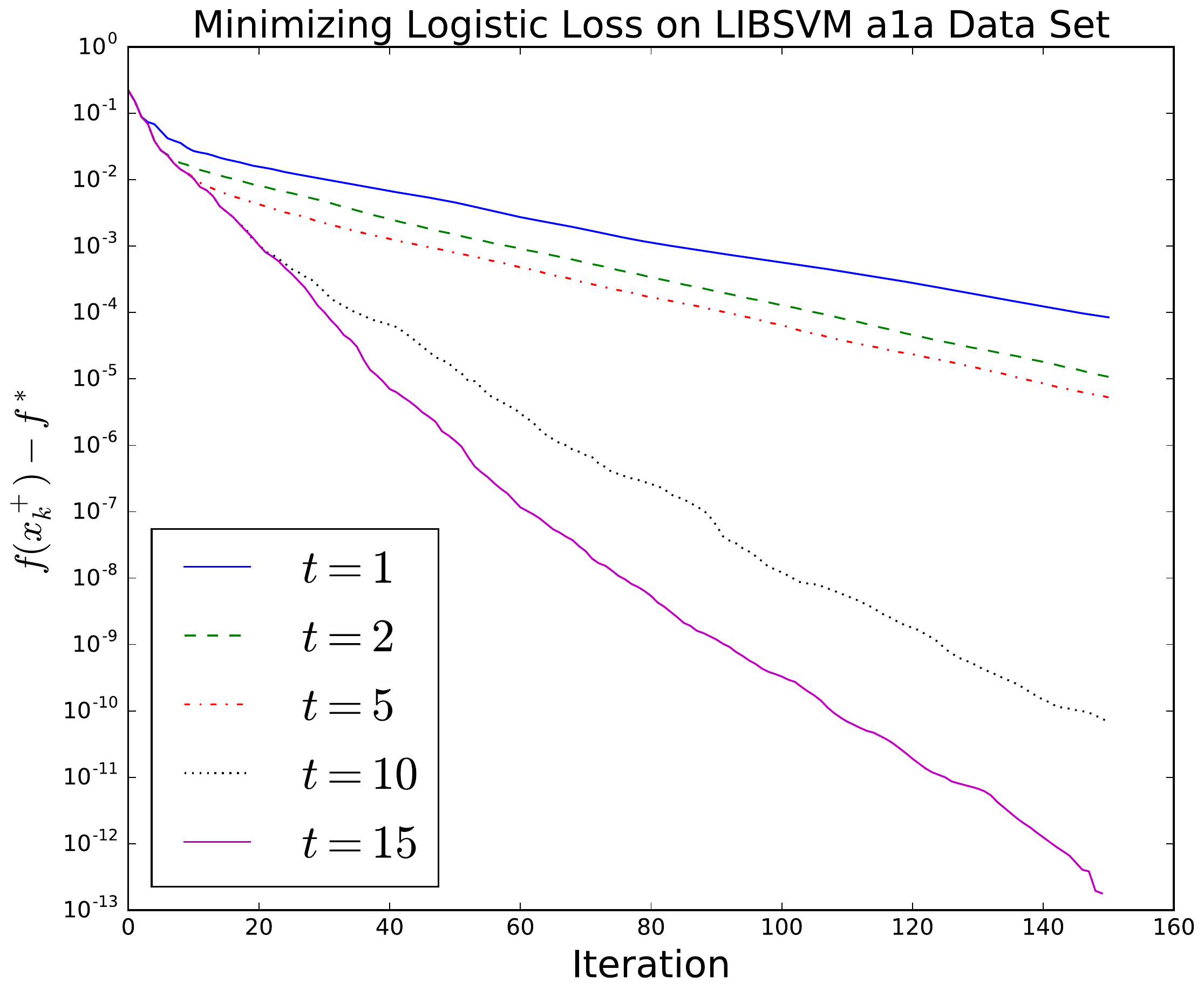} \includegraphics[width = 3in]{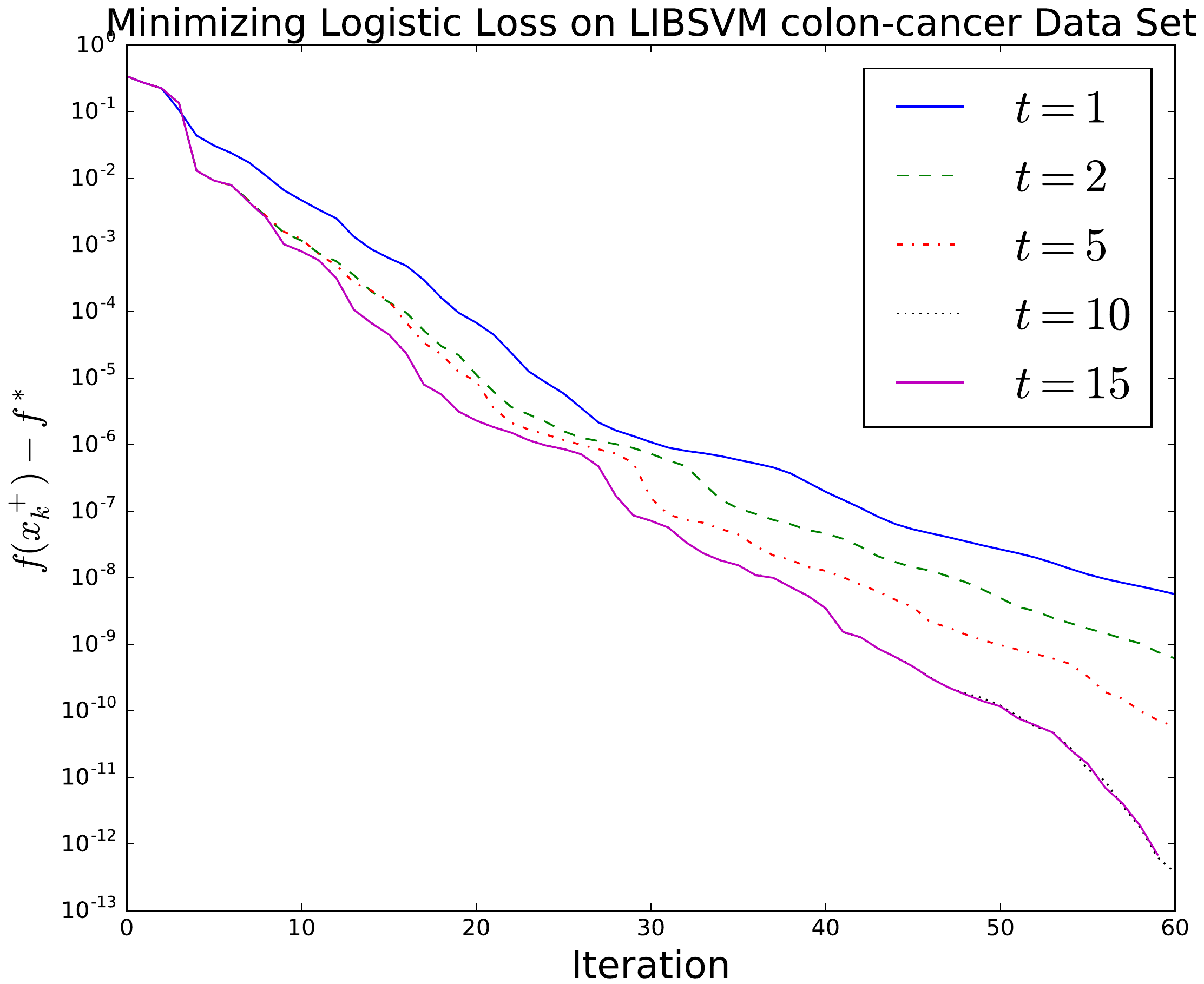}
\end{center}
\caption{Algorithm~\ref{alg:optAvgWithMem} with various memory sizes $t$.  The case $t = 1$ corresponds to the memoryless optimal averaging method in Algorithm~\ref{alg:optQuadAvg}.  The task is logistic regression, with regularization $\alpha = 0.0001$, on data sets a1a and colon-cancer.} \label{fig:logistic}
\end{figure}

\begin{figure}[h!]
\begin{center}
\includegraphics[width=3in]{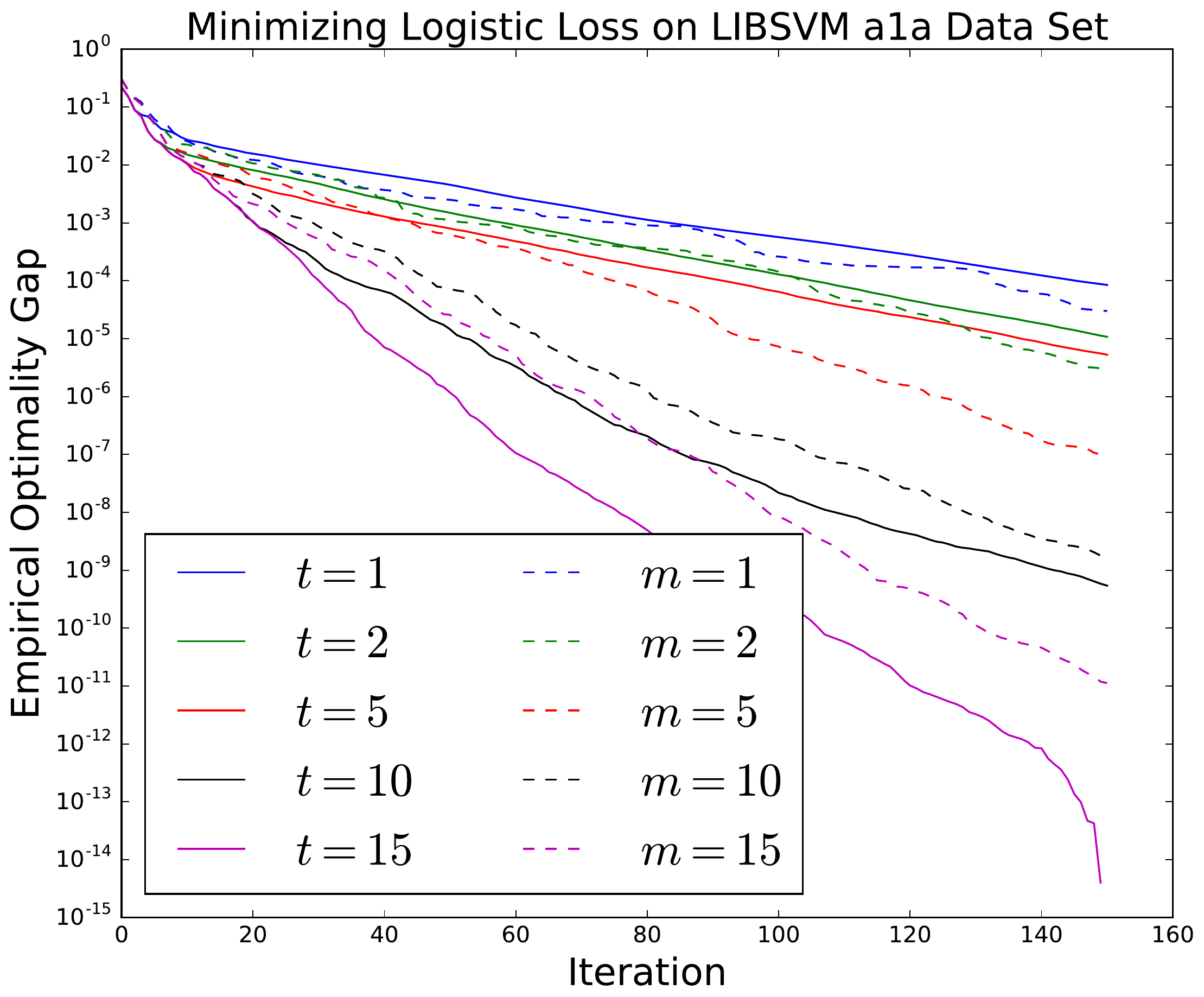} \includegraphics[width = 3in]{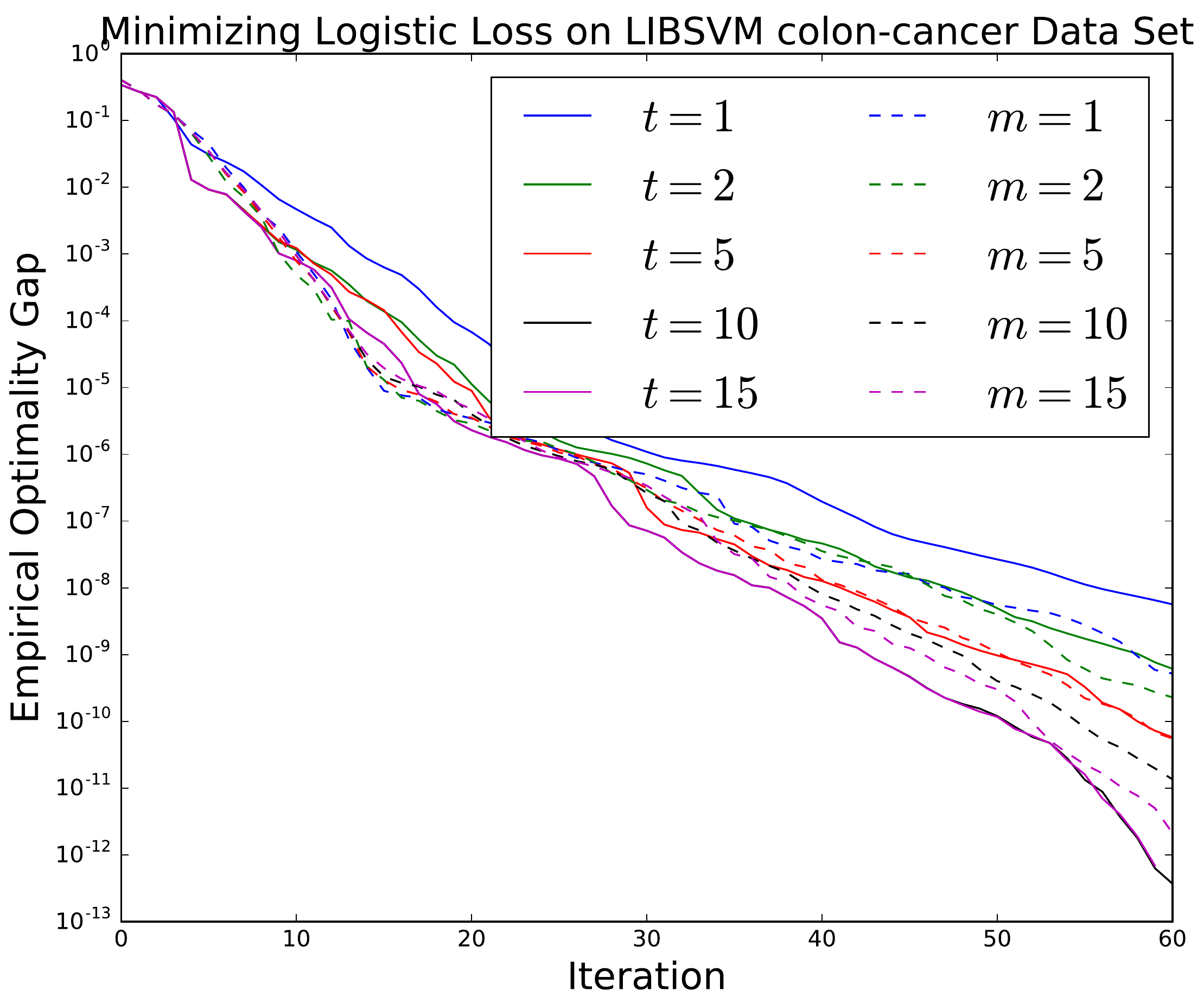}
\end{center}
\caption{Algorithm~\ref{alg:optAvgWithMem} with memory size $t$ versus L-BFGS with memory size $m$.  The task is logistic regression, with regularization $\alpha = 0.0001$, on data sets a1a and colon-cancer.} \label{fig:lbfgs}
\end{figure}

It is perhaps fairer to compare L-BFGS with memory size $m$ to Algorithm~\ref{alg:optAvgWithMem} with memory size $t = 2m$ (see Figure~\ref{fig:logistic_same_mem}).
Indeed, L-BFGS with memory size $m$ actually stores $m$ \emph{pairs} of vectors, whereas Algorithm~\ref{alg:optAvgWithMem} with memory size $t$ only stores $t$ vectors.
Moreover, the most expensive operation per iteration in L-BFGS requires $4mn$ multiplications (see \cite[Algorithm 7.4]{NW}); in contrast, computing a new center in Algorithm~\ref{alg:optAvgWithMem} requires $2 n (t + 1)$ multiplications plus the cost of solving a small quadratic program.
(Updating the matrix $C^T C$ takes $t+1$ inner products in $\R^n$, finding $\lambda$ amounts to solving a small quadratic program, and computing $C \lambda$ takes $n$ inner products in $\R^{t+1}$.)
In Figure~\ref{fig:logistic_smaller_alpha}, we again compare L-BFGS and Algorithm~\ref{alg:optAvgWithMem} on logisitic regression, but with less regularization.

\begin{figure}[h!]
\begin{center}
\includegraphics[width=3in]{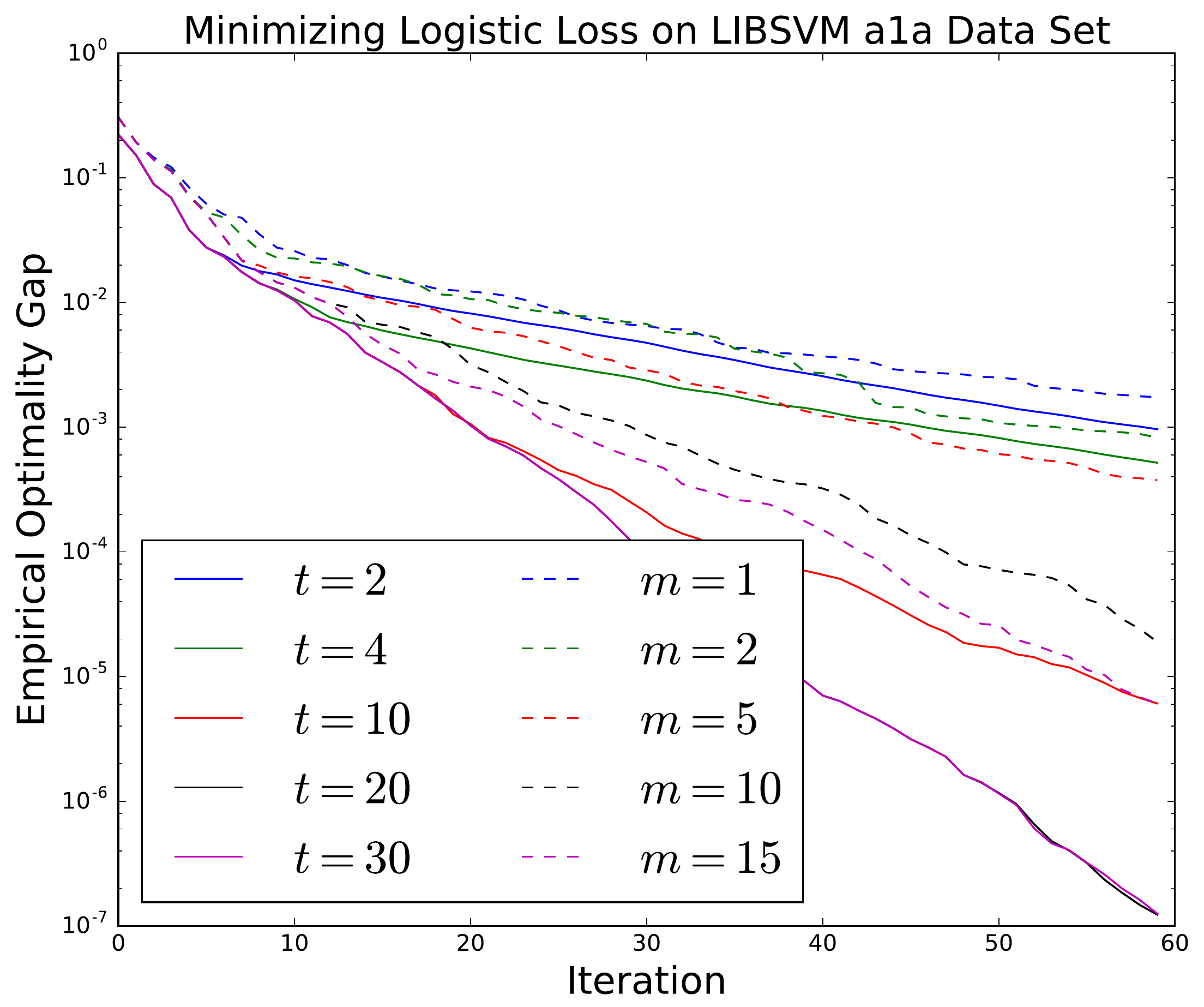} \includegraphics[width = 3in]{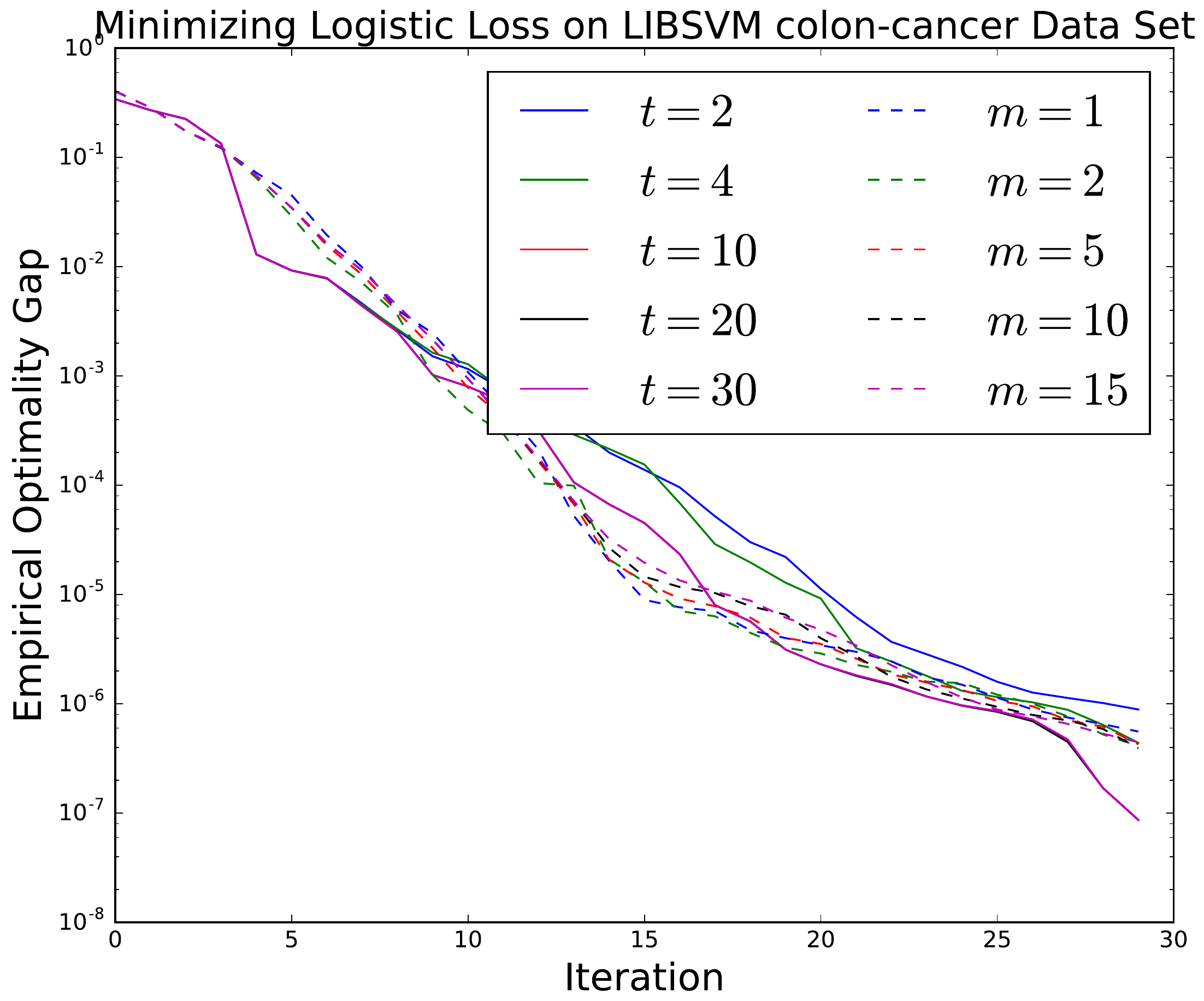}
\end{center}
\caption{A fairer (equal memory) comparison of Algorithm~\ref{alg:optAvgWithMem} and L-BFGS. The task is still logistic regression, with regularization $\alpha = 0.0001$, on data sets a1a and colon-cancer. We focus on lower accuracy than we did in Figure~\ref{fig:lbfgs}.  } \label{fig:logistic_same_mem}
\end{figure}

\begin{figure}[h!]
\begin{center}
\includegraphics[width=3in]{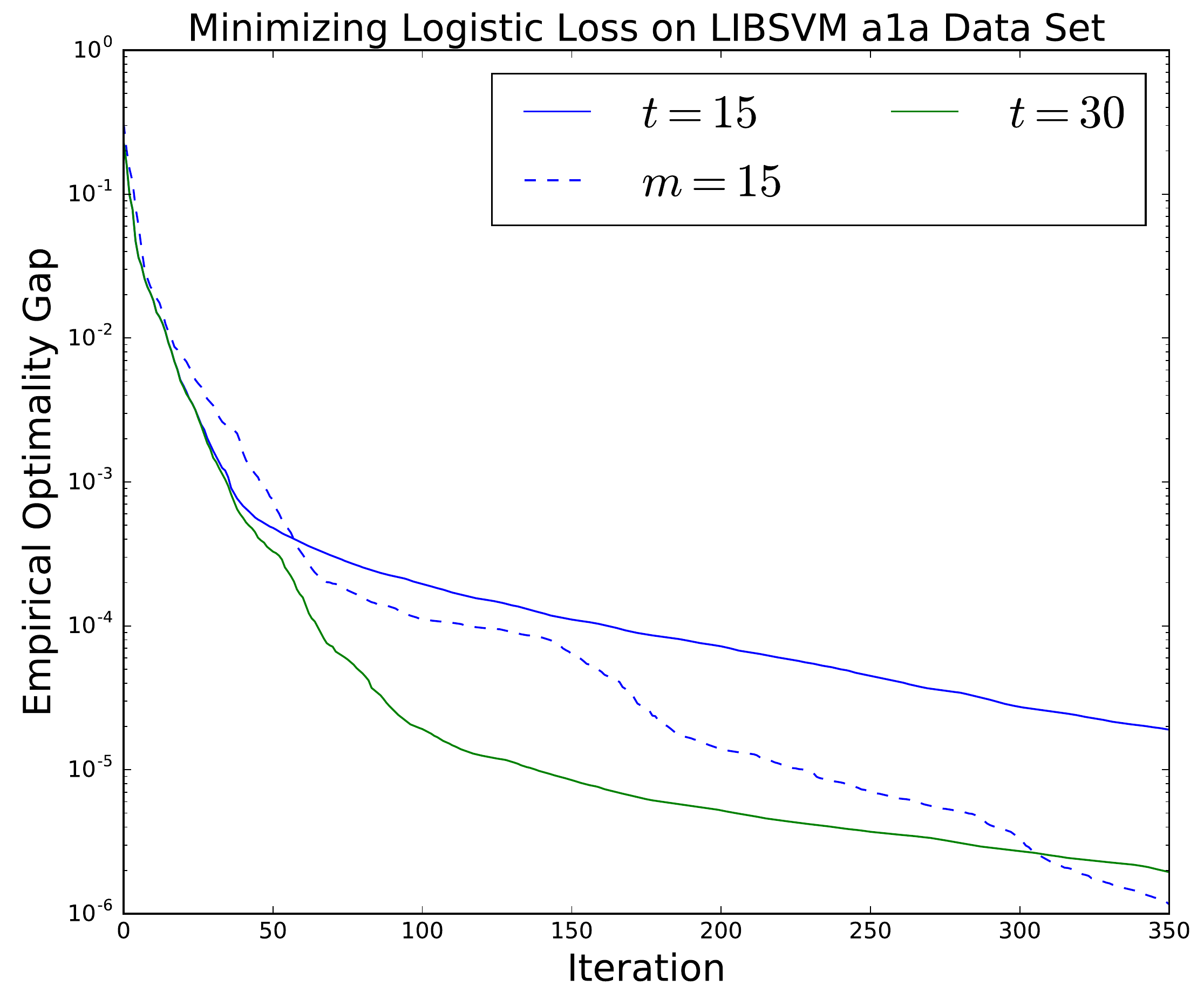} \includegraphics[width = 3in]{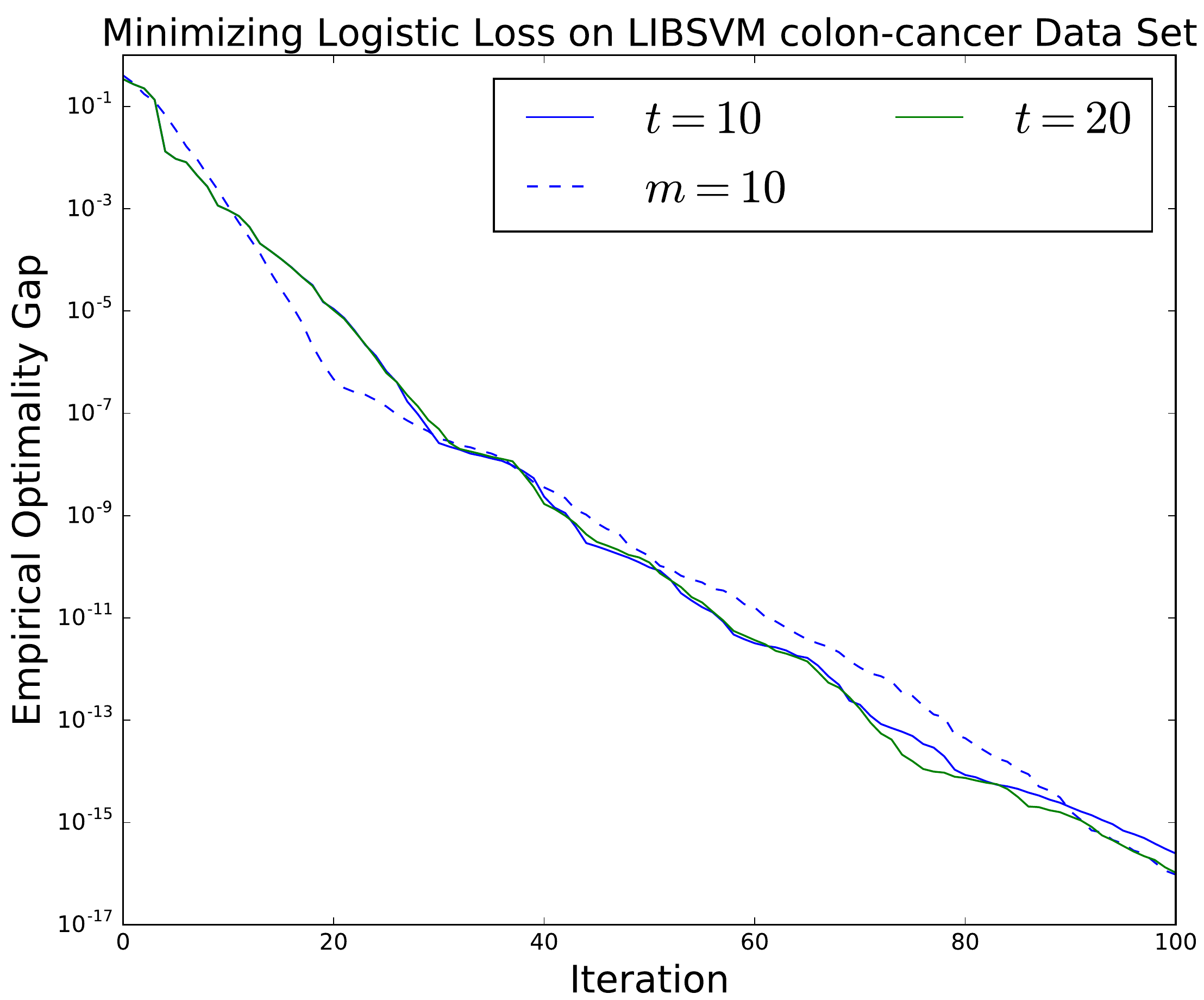}
\includegraphics[width=3in]{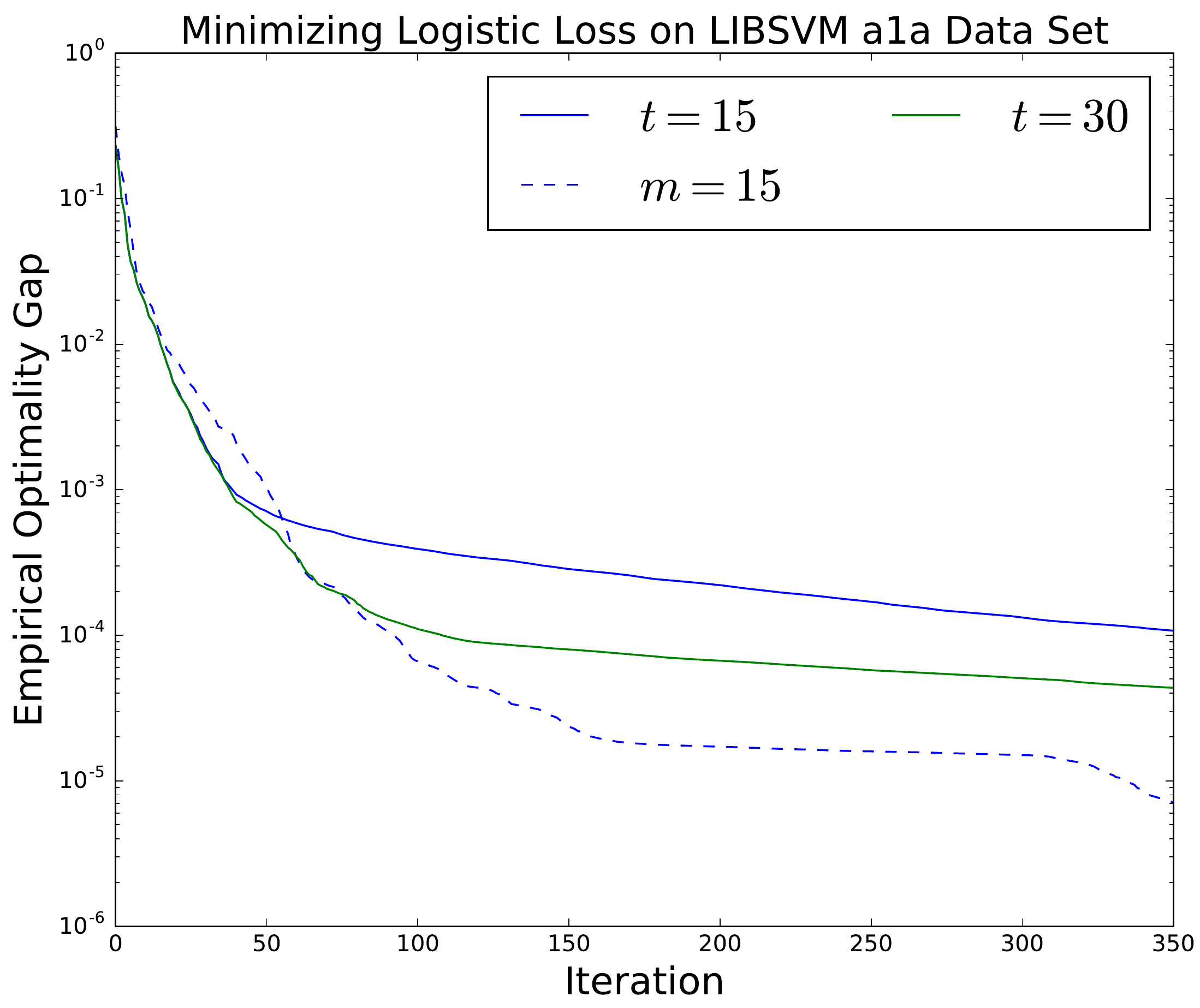} \includegraphics[width = 3in]{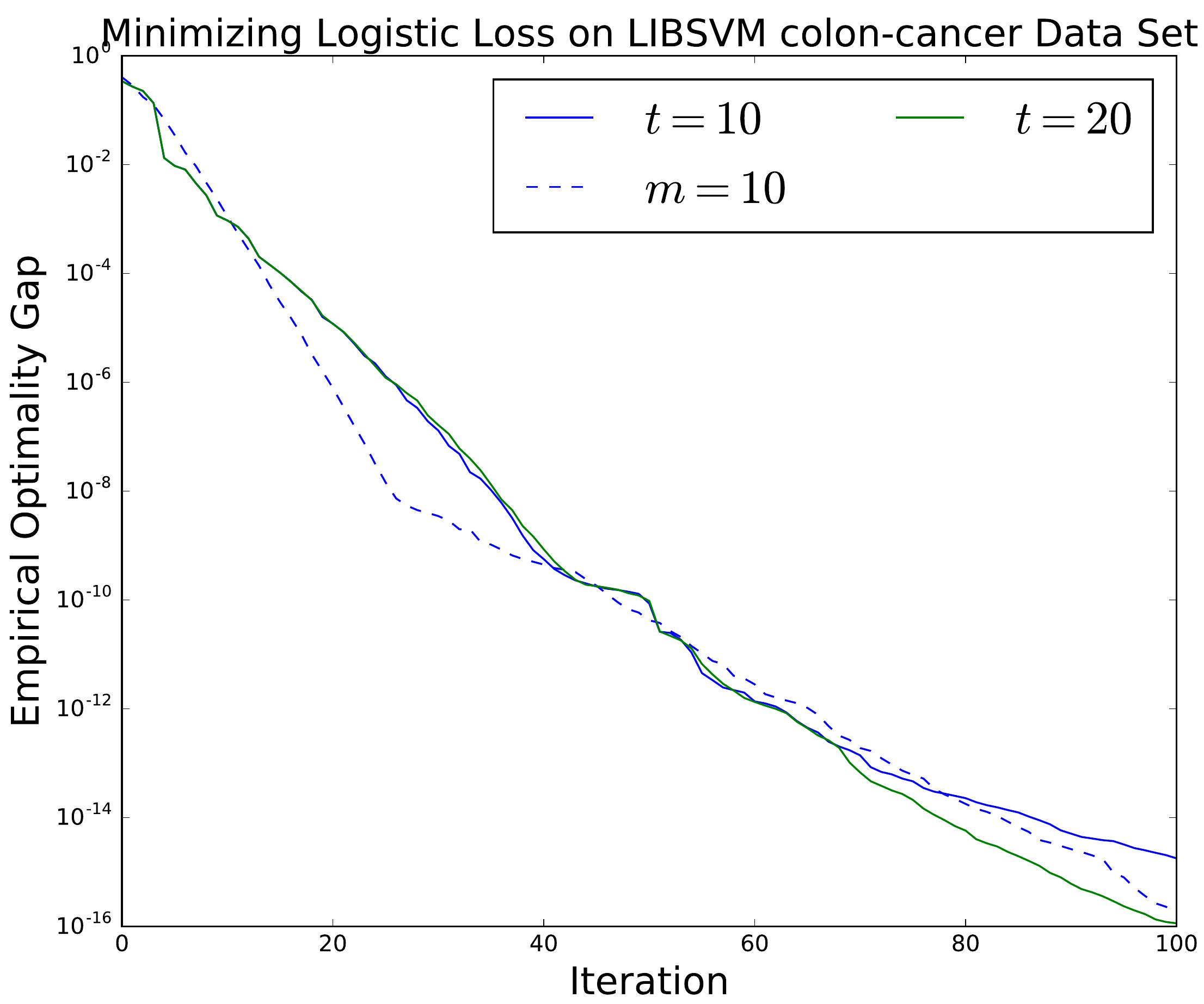}
\end{center}
\caption{Algorithm~\ref{alg:optAvgWithMem} with memory size $t$ versus L-BFGS with memory size $m$.  The task is logistic regression on data sets a1a and colon-cancer, with $\alpha = 10^{-6}$ (top row) and $\alpha = 10^{-8}$ (bottom row).} \label{fig:logistic_smaller_alpha}
\end{figure}

We noticed that the small dimensional quadratic program in Algorithm~\ref{alg:optAvgWithMem} must be solved to high accuracy, especially on poorly conditioned problems; an active-set method works well.
Accuracy in the line search is less important.
Minimizing the one-dimensional function $r \mapsto f(x + r d)$, with $\norm{d} = 1$, to within $10^{-4}$ accuracy in $r$ works well in general.
In Figure~\ref{fig:ls_tol}, we show how line search accuracy affects Algorithm~\ref{alg:optQuadAvg}.

\begin{figure}[h!]
\begin{center}
\includegraphics[width=3in]{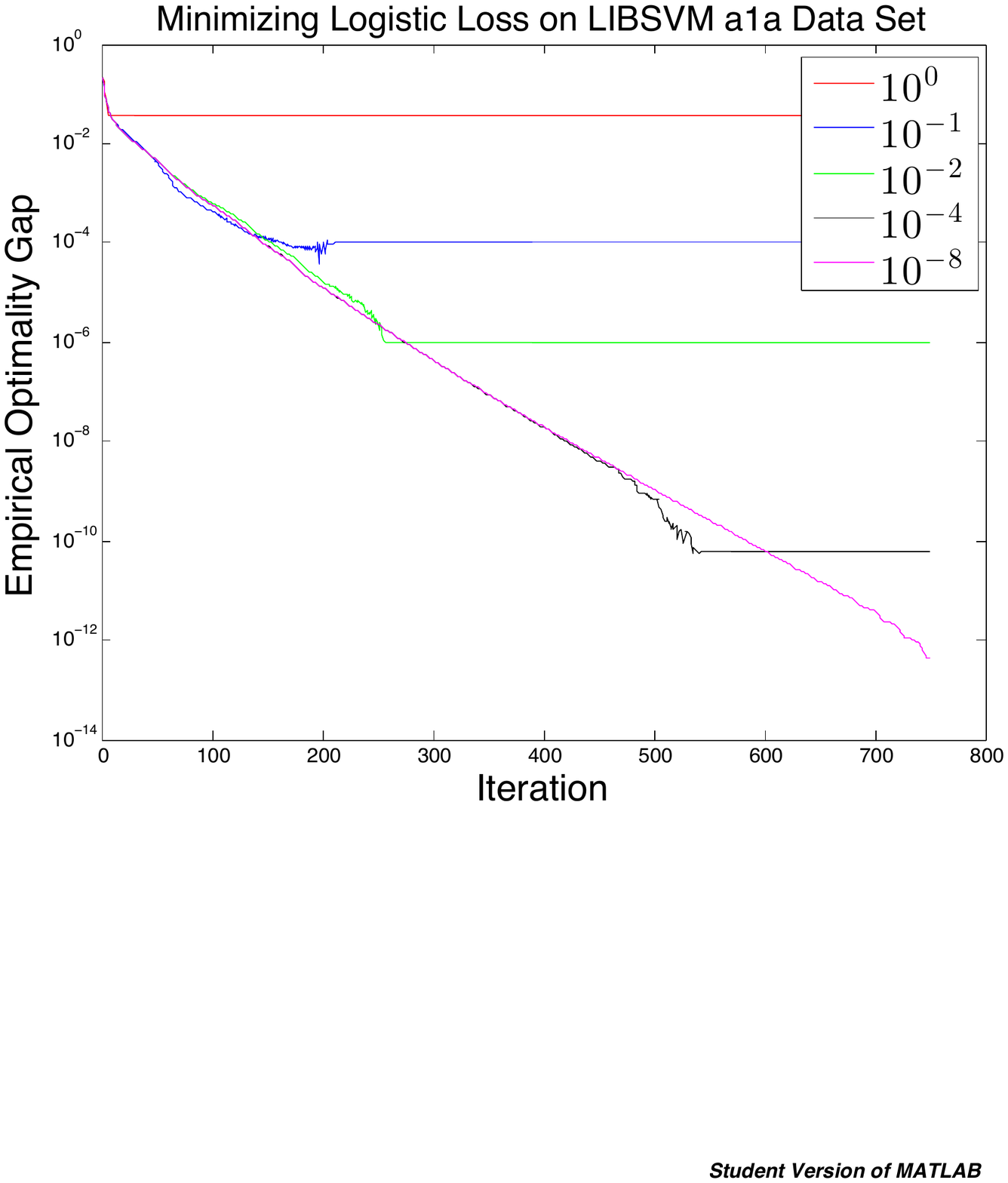} \includegraphics[width = 3in]{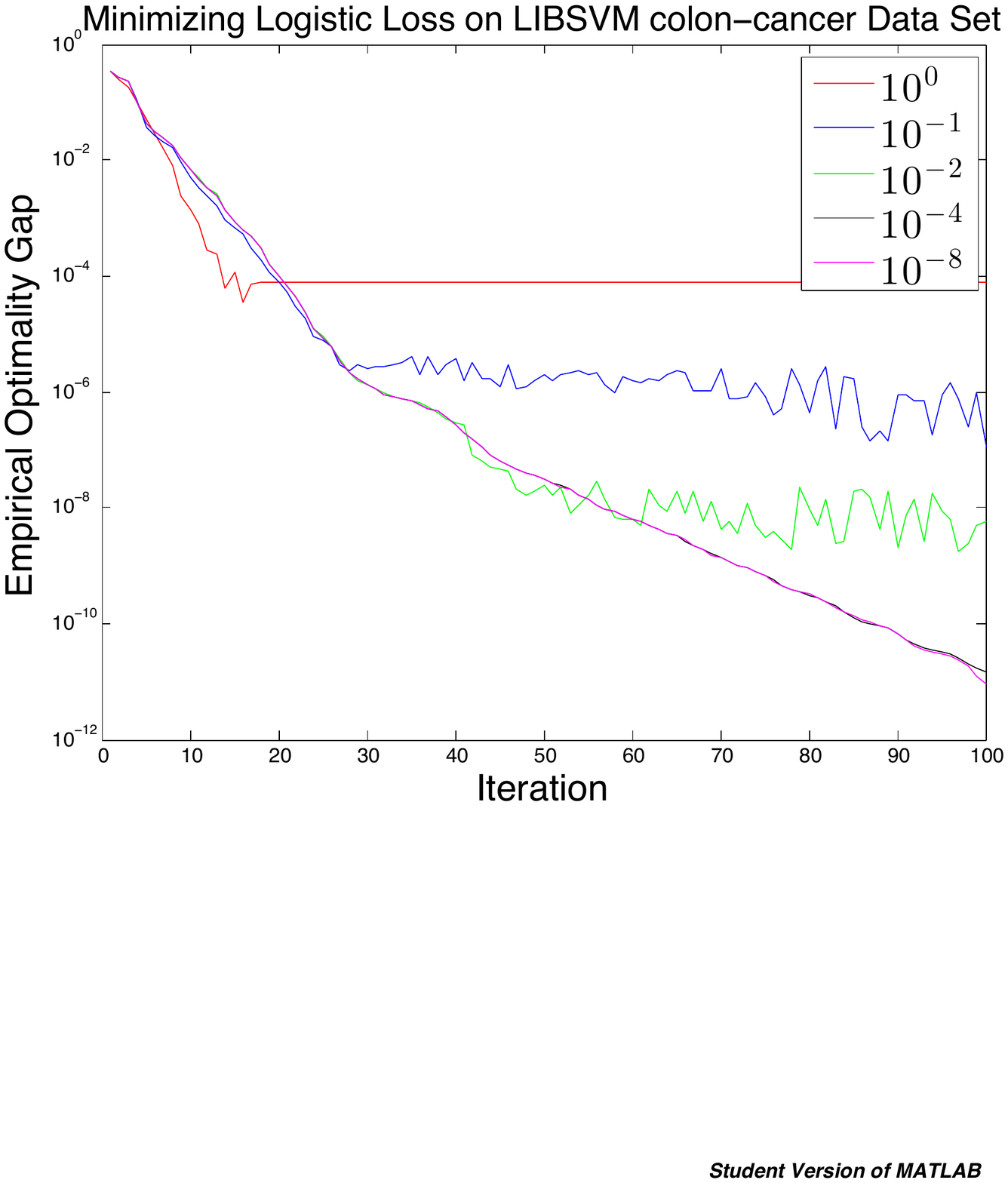}
\includegraphics[width=3in]{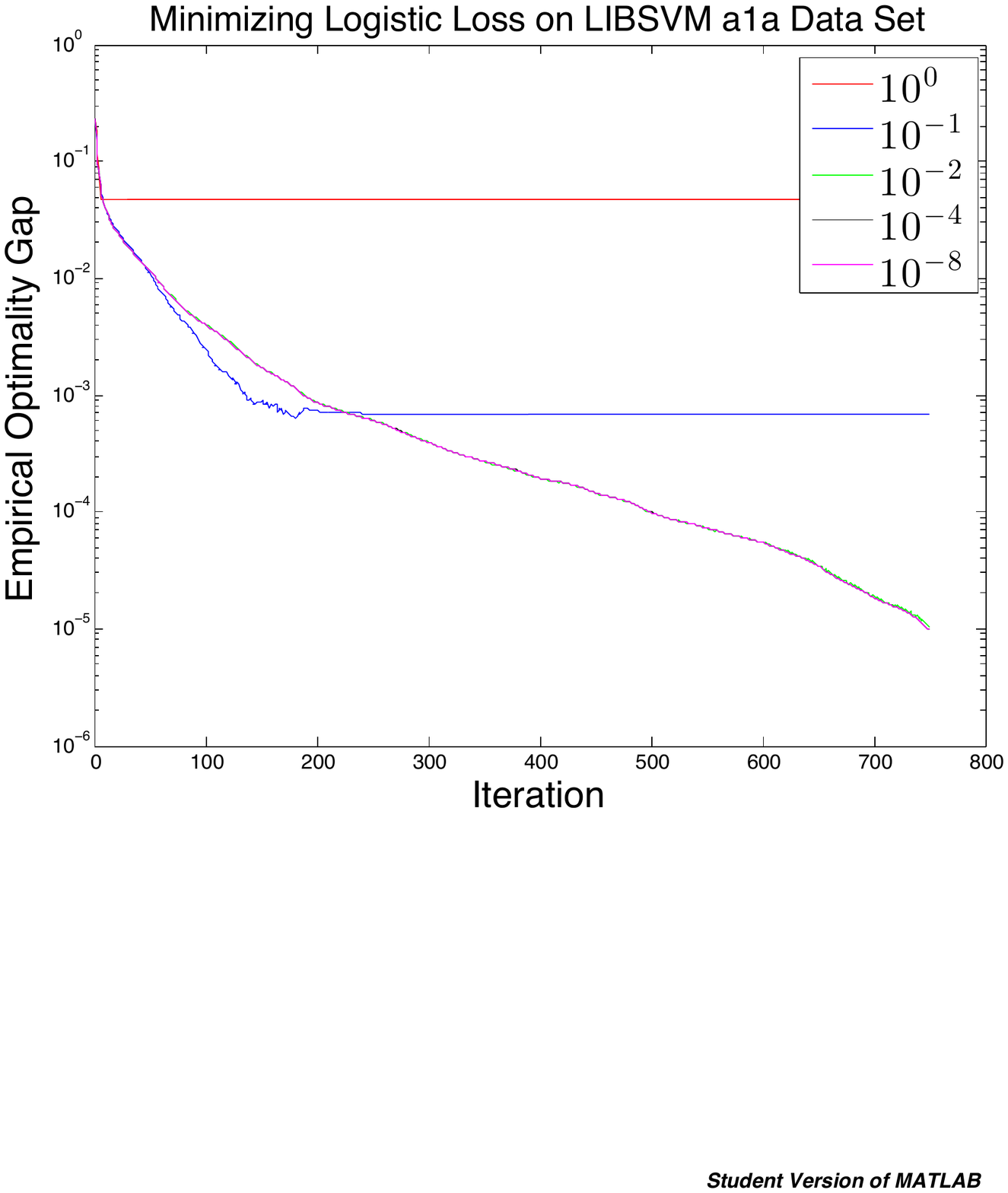} \includegraphics[width = 3in]{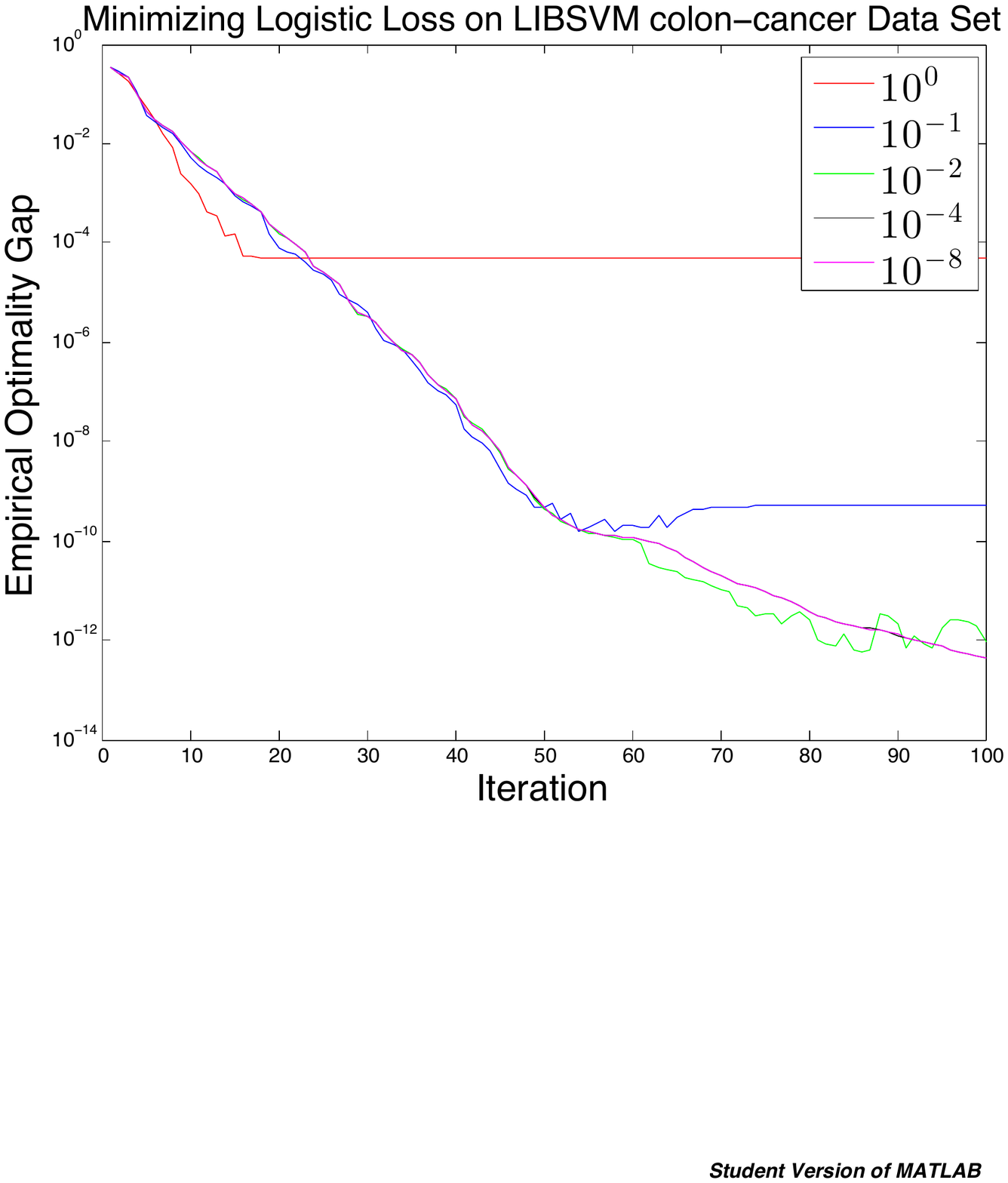}
\end{center}
\caption{A comparison of how the line search tolerance in Algorithm~\ref{alg:optQuadAvg} affects convergence.  In the top row, we do the comparison with logistic regression on the a1a and colon-cancer data sets with regularization $\alpha = 10^{-4}$.  In the bottom row, we use regularization $10^{-8}$.} \label{fig:ls_tol}
\end{figure}

	\newpage
\section{Comments on proximal extensions}\label{sec:prox_ext}
It is natural to try to extend geometric descent and optimal quadratic averaging to a proximal setting. For the sake of concreteness, let us focus on geometric descent. We can easily extend the suboptimal version of the algorithm to the proximal setting, but some difficulties arise when accelerating the method.
Suppose we are interested in solving the problem
$$\min_x\, f(x):=g(x)+h(x),$$
 where $g\colon\R^n\to\R$ is $\beta$-smooth and $\alpha$-strongly convex, and $h\colon\R^n\to\R\cup\{+\infty\}$ is closed, convex, and is such that the proximal mapping
  $$\text{prox}_{th}(x):=\argmin_z\,\{ h(z)+\frac{1}{2t}\|z-x\|^2\}$$ is easily computable.
In the analysis of first-order methods for such problems, the \emph{gradient mapping} $G_t(x) := \frac{1}{t} \left( x - \text{prox}_{th}( x - t \nabla g(x) ) \right)$ plays the role of the usual gradient. The following is a standard estimate; see for example \cite[Section 2.2.3]{nestBook}. We provide a proof for completeness.
 \begin{lem} \label{smoothcvxbnd}
Fix a step length $t>0$ and define a proximal gradient step $x^+ := x - t G_t(x)$.
Then for every $y\in \R^n$ the inequality holds:
\[ f(y) \geq f(x^+) + \ip{G_t(x)}{y - x} + t \left( 1 - \frac{\beta t}{2} \right) \tnorm{ G_t(x) }^2 + \frac{\alpha}{2} \tnorm{y - x}^2. \]
\end{lem}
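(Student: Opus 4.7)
The plan is to imitate the classical proof of the proximal gradient descent lemma, combining three familiar estimates: the smoothness upper bound for $g$ at $x^+$, the strong convexity lower bound for $g$ at $y$, and the subgradient inequality for $h$ at $x^+$. The key algebraic identity $x - x^+ = t G_t(x)$ will allow us to convert squared-norm terms involving $x^+ - x$ into terms involving $\|G_t(x)\|^2$.

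First I would record the optimality condition defining $x^+ = \text{prox}_{th}(x - t\nabla g(x))$. Writing this out gives
\[
G_t(x) - \nabla g(x) \in \partial h(x^+),
\]
so by convexity of $h$, for every $y\in\R^n$,
\[
h(y) \geq h(x^+) + \ip{G_t(x) - \nabla g(x)}{y - x^+}.
\]
Next I would combine the $\beta$-smoothness upper bound $g(x^+) \leq g(x) + \ip{\nabla g(x)}{x^+ - x} + \tfrac{\beta}{2}\|x^+-x\|^2$ (rearranged to lower bound $g(x)$) with the $\alpha$-strong convexity bound $g(y) \geq g(x) + \ip{\nabla g(x)}{y-x} + \tfrac{\alpha}{2}\|y-x\|^2$. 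This yields
\[
g(y) \geq g(x^+) + \ip{\nabla g(x)}{y - x^+} - \tfrac{\beta}{2}\|x^+-x\|^2 + \tfrac{\alpha}{2}\|y-x\|^2.
\]

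Adding the two inequalities collapses the $\pm \nabla g(x)$ terms to a single $\ip{G_t(x)}{y - x^+}$ term, leaving
\[
f(y) \geq f(x^+) + \ip{G_t(x)}{y - x^+} - \tfrac{\beta}{2}\|x^+ - x\|^2 + \tfrac{\alpha}{2}\|y-x\|^2.
\]
Finally I would substitute $x^+ - x = -t G_t(x)$, so that $\|x^+-x\|^2 = t^2\|G_t(x)\|^2$ and $\ip{G_t(x)}{y - x^+} = \ip{G_t(x)}{y - x} + t\|G_t(x)\|^2$. Collecting the two $\|G_t(x)\|^2$ contributions yields the coefficient $t - \tfrac{\beta t^2}{2} = t(1 - \tfrac{\beta t}{2})$, giving the stated inequality.

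The proof is essentially routine; the only subtle point is being careful with the sign conventions in the prox optimality condition so that the subgradient of $h$ at $x^+$ is correctly identified as $G_t(x) - \nabla g(x)$. Once that is in hand, the rest is pure bookkeeping.
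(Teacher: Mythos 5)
Your proof is correct and follows essentially the same route as the paper's: the three ingredients ($\beta$-smoothness of $g$ at $x^+$, $\alpha$-strong convexity of $g$ at $y$, and the subgradient inequality for $h$ at $x^+$ obtained from $G_t(x)-\nabla g(x)\in\partial h(x^+)$) are identical, with only the bookkeeping arranged differently. The algebra collecting the coefficient $t(1-\tfrac{\beta t}{2})$ checks out.
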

\begin{proof}
Appealing to $\beta$-smoothness of $g$, we deduce
\[ f(x^+) \leq g(x) - t \ip{ \nabla g(x)}{ G_t(x) } +\frac{\beta t^2}{2} \tnorm{ G_t(x) }^2 + h(x^+). \]
Furthermore, strong convexity of $g$ implies
\[ f(x^+) \leq g(y) + \ip{\nabla g(x)}{x^+ - y} - \frac{\alpha}{2} \tnorm{y-x}^2 +\frac{\beta t^2}{2} \tnorm{ G_t(x) }^2 + h(x^+). \]
Finally, using the observation that $G_t(x) - \nabla g(x)$ belongs to $\partial h ( x^+ )$, we have
\[ f(x^+) \leq f(y) + \ip{G_t(x)}{ x^+ - y} - \frac{\alpha}{2} \tnorm{y-x}^2 +\frac{\beta t^2}{2} \tnorm{ G_t(x) }^2. \]
Rearrangement completes the proof.
\end{proof}

If we let $y = x^*$ in Lemma~\ref{smoothcvxbnd} and rearrange we get
\[ x^* \in \B{x- \frac{1}{\alpha} G_t(x)}{\left( \frac{1}{\alpha^2} - \frac{2}{\alpha} t + \frac{\beta}{\alpha} t^2 \right) \tnorm{G_t(x)}^2 - \frac{2}{\alpha} \left( f(x^+) - f^* \right)}. \]
How should we choose the step length $t$?
A simple approach is to choose $t$ to minimize the quantity $\frac{1}{\alpha^2} - \frac{2}{\alpha} t + \frac{\beta}{\alpha} t^2$, i.e., set $t = \frac{1}{\beta}$.
With this choice of $t$, we deduce the inclusion
\[ x^* \in \B{x^{++}}{\left( 1 - \frac{1}{\kappa} \right) \frac{\tnorm{G_{1/\beta}(x)}^2}{\alpha^2} - \frac{2}{\alpha} \left( f(x^+) - f^* \right) }, \]
where $x^{++} = x- \frac{1}{\alpha} G_{1/\beta}(x)$ is a \emph{long step} and $x^+ = x - \frac{1}{\beta} G_{1/\beta}(x)$ is a \emph{short step}.
A proximal version of the suboptimal geometric descent follows easily from Lemma~\ref{lem:ballIntersection1}.

To accelerate the proximal geometric descent algorithm we assume in iteration $k$ that $x^*$ lies in some ball
\[ \B{c_k}{R_k^2 - \frac{2}{\alpha} \left( f(y_k) - f^* \right) }. \]
We then consider a second minimizer enclosing ball derived from information at some point $x_{k+1}$:
\[ x^* \in \B{x_{k+1}^{++}}{\left( 1 - \frac{1}{\kappa} \right) \frac{\tnorm{G_{1/\beta}(x_{k+1})}^2}{\alpha^2} - \frac{2}{\alpha} \left( f(x_{k+1}^+) - f^* \right) }. \]
Following the same pattern as in Section~\ref{subsec:opt_geo}, if we choose $x_{k+1}$ to satisfy $f(x_{k+1}) \leq f(y_k)$ and appeal to the  smoothness inequality $f(x_{k+1}^+) \leq f(x_{k+1}) - \frac{1}{2\beta} \tnorm{G_{1/\beta}(x_{k+1})}^2$, we deduce the inclusion
\[ x^* \in \B{c_k}{R_k^2 -\frac{1}{\kappa} \frac{\tnorm{G_{1/\beta}(x_{k+1})}^2}{\alpha^2 }  -\frac{2}{\alpha} \left( f(x_{k+1}^+) - f^* \right) }. \]
By Lemma~\ref{lem:ballIntersection2} there is a new center $c_{k+1}$ with
\[ x^* \in \B{c_{k+1}}{ \left( 1 - \frac{1}{\sqrt{\kappa}} \right) R_k^2  -\frac{2}{\alpha} \left( f(x_{k+1}^+) -f^* \right) }, \]
provided the old centers $x_{k+1}^{++}$ and $c_k$ are far apart; specifically, we must be sure that the inequality
\[ \tnorm{x_{k+1}^{++} - c_k}^2 \geq \frac{\tnorm{G_{1/\beta}(x_{k+1})}^2}{\alpha^2} \qquad\textrm{holds}. \]
How do we choose $x_{k+1}$ to satisfy both $f(x_{k+1}) \leq f(y_k)$ and $\tnorm{x_{k+1}^{++} - c_k}^2 \geq \frac{\tnorm{G_{1/\beta}(x_{k+1})}^2}{\alpha^2}$?
The desired $x_{k+1}$ does exist; for example, $x_{k+1} = x^*$ is such a point.
In the proximal setting, it is not clear how to choose $x_{k+1}$ to ensure these two inequalities (even for specific problem classes).
This is an interesting topic for future research.

\subsection*{Acknowledgments}
We thank the anonymous referee for useful suggestions, which undoubtedly improved the quality of the paper. We also thank Stephen J. Wright for pointing out an important typo in the proof of Theorem~\ref{thm:optAvgConvergence} in an early version of the manuscript.

	\nopagebreak
	\bibliographystyle{plain}
	\bibliography{bibliography}

\appendix
\section{Exact line search in accelerated gradient descent}\label{sec:exact_line_search}

Nesterov's method is based on an \emph{estimate sequence}; that is, a sequence of functions $Q_k$ and nonnegative numbers $\Lambda_k$ with
\[ \Lambda_k \to 0 \quad \text{and} \quad Q_k(x) \leq (1 - \Lambda_k) f(x) + \Lambda_k Q_0(x). \]
Estimate sequences are useful because if $y_k$ satisfies $f(y_k) \leq v_k := \min_{x \in \R^n} Q_k(x)$, then
\[ f(y_k) - f^* \leq \Lambda_k \left( Q_0( x^* ) - f^* \right); \]
that is, $f(y_k)$ approaches $f^*$ with error proportional to $\Lambda_k$, see \cite{nestBook}.

The quadratics in Algorithm~\ref{alg:nestOpt} (with appropriately chosen $\Lambda_k$) form an estimate sequence.
To explain, for $k \geq 1$, pick vectors $x_{k}$ and numbers $\lambda_k \in (\delta,1)$ with $\delta > 0$.
Next, recursively define  
\begin{align*}
Q_0(x) &= v_0 + \frac{\gamma_0}{2} \tnorm{x - c_0}^2 \quad \text{and} \\
Q_{k}(x) &= (1-\lambda_{k}) Q_{k-1}(x) + \lambda_{k} \left(  f(x_{k}) - \frac{\tnorm{\nabla f(x_{k})}^2}{2 \alpha} + \frac{\alpha}{2} \tnorm{ x - x_{k}^{++}}^2 \right).
\end{align*}
Then the quadratics $Q_k$ and numbers $\Lambda_k = \prod_{j=1}^{k} (1 - \lambda_j)$ are an estimate sequence for $f$.
Nesterov's method is designed to ensure the inequality $f(x_k^+) \leq v_k$ with the added \emph{optimal rate condition} $\lambda_k \geq \sqrt{ \frac{\alpha}{\beta} }$.

The scheme in Algorithm~\ref{alg:nestOpt} with $x_{k}=\ls{c_{k-1}}{x_{k-1}^+}$ also guarantees these conditions.
Trivially we have $f(x_0^+) \leq v_0$.
Assume, for induction, that we have $f(x_{k-1}^+) \leq v_{k-1}$.
From \cite[Lemma 2.2.3]{nestBook}, we know
\begin{align*}
 v_{k} = (1 - \lambda_k) v_{k-1} + &\lambda_k f(x_{k}) - \frac{\lambda_k^2}{2 \gamma_{k}} \tnorm{ \nabla f (x_{k} ) }^2 + \\
 &+\frac{\lambda_k(1-\lambda_k) \gamma_{k-1}}{\gamma_{k}} \left( \frac{\alpha}{2} \tnorm{x_{k} - c_{k-1}}^2 + \ip{\nabla f(x_{k}) }{c_{k-1} - x_{k}} \right). 
\end{align*}
Since $x_{k} = \ls{c_{k-1}}{x_{k-1}^+}$, we have $f(x_{k}) \leq f(x_{k-1}^+) \leq v_{k-1}$ and $\ip{\nabla f(x_{k})}{c_{k-1} - x_{k}} = 0$, and therefore
\begin{align*}
v_{k} &\geq f(x_{k})  - \frac{\lambda_k^2}{2 \gamma_{k}} \tnorm{ \nabla f (x_{k} ) }^2 = f(x_{k}) - \frac{1}{2\beta} \tnorm{ \nabla f (x_{k} ) }^2 
\geq f(x_{k}^+).
\end{align*}
Provided we set $\gamma_0 \geq \alpha$, we get the optimal rate condition $\lambda_k = \sqrt{\frac{\gamma_{k}}{\beta}} \geq \sqrt{\frac{\alpha}{\beta}}$.

\end{document}